\documentclass[12pt]{amsart}

 \usepackage{amsfonts,graphics,amsmath,amsthm,amsfonts,amscd,amssymb,amsmath,latexsym,multicol,
 mathrsfs}
\usepackage{epsfig,url}
\usepackage{flafter}
\usepackage{fancyhdr}
\usepackage{hyperref}
\hypersetup{colorlinks=true, linkcolor=black}

\addtolength{\oddsidemargin}{-0.3in}
\addtolength{\evensidemargin}{-0.3in}
\addtolength{\textwidth}{0.6in}

\addtolength{\topmargin}{-0.4in}
\addtolength{\textheight}{0.7in}


\makeatletter

\def\jobis#1{FF\fi
  \def\predicate{#1}%
  \edef\predicate{\expandafter\strip@prefix\meaning\predicate}%
  \edef\job{\jobname}%
  \ifx\job\predicate
}

\makeatother

\if\jobis{proposal}%
\else
\fi

 \usepackage[matrix, arrow]{xy}

\DeclareMathOperator{\Supp}{Supp}

\DeclareMathOperator{\Fix}{Fix}


 \numberwithin{equation}{subsection}
 \numberwithin{footnote}{subsection}

 \newtheorem{cor}[subsection]{Corollary}
 \newtheorem{lem}[subsection]{Lemma}
 
 \newtheorem{thm}[subsection]{Theorem}
 \newtheorem{conj}[subsection]{Conjecture}

\newtheorem{claim}[subsection]{Claim}
 \newtheorem{quest}[subsection]{Question}

{
    \newtheoremstyle{upright}%
        {8pt plus2pt minus4pt}%
        {8pt plus2pt minus4pt}%
        {\upshape}%
        {}%
        {\bfseries\scshape}%
        {}%
        {1em}%
        {}%
\theoremstyle{upright}

}

 \newcommand{\N}{\mathbb N}
 \newcommand{\PP}{\mathbb P}
 
 \newcommand{\Q}{\mathbb Q}
 \newcommand{\R}{\mathbb R}
 
 \newcommand{\bir}{\dashrightarrow}
 \newcommand{\rddown}[1]{\left\lfloor{#1}\right\rfloor} 


\title{On log minimal models and Zariski decompositions II}
\thanks{2010 MSC: 14E30}
\author{Caucher Birkar and Zhengyu Hu}
\date{\today}
\begin{document}
\maketitle

\begin{abstract}
We continue our study of the relation between log minimal models and various types of Zariski decompositions.
Let $(X,B)$ be a projective log canonical pair. We will show that $(X,B)$ has a log minimal model if
either $K_X+B$ birationally has a
Nakayama-Zariski decomposition with nef positive part, or that $K_X+B$ is big and birationally it has
a Fujita or CKM Zariski decomposition. Along the way we introduce polarized pairs $(X,B+P)$
where $(X,B)$ is a usual projective pair and $P$ is nef, and study the birational geometry
of such pairs.
\end{abstract}



\section{Introduction}

We will work over an algebraically closed field $k$ of characteristic zero. In this paper we continue
our study of the relation between log minimal models and various types of Zariski decompositions.
Let $(X,B)$ be a projective log canonical (lc) pair.
The main result of [\ref{B-WZD}] states that if $K_X+B$ birationally has a weak Zarsiki decomposition, then
the pair has a log minimal model assuming that the log minimal model program (LMMP) holds in lower dimension,
in particular, this assumes termination of log flips in lower dimension.
In this paper, we show that if we take a stronger form of Zariski decomposition, then we can construct a
log minimal model without any extra assumptions in lower dimension (for simplicity we
state our results in the absolute projective case but everything can be formulated and proved
in a similar way in the relative setting). More precisely:

\begin{thm}\label{t-main-1}
Let $(X,B)$ be a projective lc pair with $K_X+B$ pseudo-effective.
If $K_X+B$ birationally has a Nakayama-Zariski decomposition with nef positive part, then
$(X,B)$ has a log minimal model.
\end{thm}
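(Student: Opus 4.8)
The plan is to build the log minimal model by running a single, carefully chosen log minimal model program and to extract termination from the nef positive part; this is precisely the feature that should let us dispense with the lower-dimensional LMMP hypothesis that was needed in the weak Zariski decomposition setting of [\ref{B-WZD}].

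First I would reduce to a convenient model. By hypothesis there is a birational model on which $K_X+B$ pulls back to a divisor whose Nakayama-Zariski decomposition has nef positive part. Passing to a higher model and taking a $\Q$-factorial dlt crepant model $(W,B_W)$ of $(X,B)$, I may assume $(W,B_W)$ is $\Q$-factorial dlt, that $K_W+B_W = P + N$ with $P = P_\sigma(K_W+B_W)$ nef and $N = N_\sigma(K_W+B_W)\ge 0$, and that a log minimal model of $(W,B_W)$ induces one of $(X,B)$; the last descent statement for crepant dlt models is the standard device already used in the weak Zariski decomposition paper, so it may be assumed.

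Next I would run a $(K_W+B_W)$-MMP with scaling of an ample divisor $A$. Each step exists because $(W,B_W)$ is $\Q$-factorial dlt, so the cone theorem applies and lc flips and divisorial contractions are available. The decisive observation is that the nef positive part governs every step: if $R$ is a $(K_W+B_W)$-negative extremal ray, then $(P+N)\cdot R<0$ while $P\cdot R\ge 0$ by nefness of $P$, whence $N\cdot R<0$. Thus every contracted or flipped locus meets $\Supp(N)$, and the entire program takes place over the fixed finite configuration of prime divisors in $N$. Moreover, since the asymptotic orders $\sigma_\Gamma$ behave well under these birational steps, the negative part of the log canonical divisor stays supported on the images of the components of $N$: no new divisorial negativity is introduced, and the whole program is confined to this bounded configuration.

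Termination is the heart of the matter and the place where the nef hypothesis is indispensable. I would first dispose of the case in which $P$, equivalently $K_W+B_W$, is big: there the MMP with scaling terminates by the termination results available for big log canonical divisors (BCHM in the klt case together with its lc extensions), producing a model $Y$ on which $K_Y+B_Y$ is nef, and since every step only contracts components of $N$ the output $(Y,B_Y)$ is a log minimal model of $(W,B_W)$ and hence of $(X,B)$. The genuine obstacle is the case where $P$ is nef but not big, so that $K_W+B_W$ is not big and the big-boundary termination machinery does not directly apply; here I expect one must exploit the full strength of the $\sigma$-decomposition, namely the rigidity of $N_\sigma$ and the invariance of the $\sigma_\Gamma$ under the program, to exclude an infinite sequence of flips supported on the fixed set $\Supp(N)$. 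Closing this bookkeeping without secretly invoking a lower-dimensional LMMP is the delicate point on which the whole argument turns.
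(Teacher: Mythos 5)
Your proposal contains a genuine gap --- in fact two, and they are exactly the points the paper's argument is built to overcome. First, you run the LMMP on $K_W+B_W$ itself, and the only consequence you extract from nefness of $P$ is that every contracted ray $R$ satisfies $N\cdot R<0$. That is correct but far too weak: nothing prevents $P\cdot R>0$, and after one such divisorial contraction or flip the birational transform of $P$ need no longer be nef, so the hypothesis you intend to propagate (a Nakayama--Zariski decomposition with nef positive part on the current model) can be destroyed at the very first step; your appeal to ``invariance of the $\sigma_\Gamma$'' has no foundation once $P$ stops being nef. The paper avoids this by running the LMMP not on $K_X+B$ but on $K_X+B+\alpha P$ for $\alpha\gg 0$, and proving (Theorem \ref{t-p-trivial-LMMP}, via the rational decomposition $P=\sum r_iP_i$ of Section 2 together with the bound on lengths of extremal rays) that every ray contracted in such an LMMP is $P$-trivial, i.e.\ $P\cdot R=0$; this is what keeps $P$ nef along the whole program and makes every step simultaneously a step of an LMMP on $K_X+B$. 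This route also forces a coefficient issue you never address: Theorem \ref{t-p-trivial-LMMP} needs $P$ or $N$ rational (or $\Supp N\subseteq \Supp B$), and for irrational $N$ the paper perturbs by a divisor $G$ with $\Supp G\subseteq \Supp N$, using Lemma \ref{l-NZD}(3)(4), and then verifies at the end, via a common resolution and the negativity lemma, that the model produced is a log minimal model of the original pair.

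Second, and decisively, you leave termination open precisely in the non-big case, but that case \emph{is} the content of the theorem: when $K_X+B$ is big, existence of a log minimal model is the already-known territory of Theorem \ref{t-main-2} and big-boundary termination, and your argument there does not use the Nakayama--Zariski hypothesis in any essential way. The paper never proves termination by bookkeeping on $\Supp N$, by bigness, or by lower-dimensional induction. Instead it runs the $P$-trivial LMMP on $K_X+B+\alpha P$ with scaling of an ample divisor $A$, uses the scaling to reach a model $Y$ on which $K_Y+B_Y+\alpha P_Y+\epsilon A_Y$ is movable for every $\epsilon>0$, and then concludes by the functorial properties of $P_\sigma$: Lemma \ref{l-NZD}(4) gives $P_\sigma(K_X+B+\alpha P)=(1+\alpha)P$, Lemma \ref{l-NZD}(5) pushes this forward along the partial LMMP to give $P_\sigma(K_Y+B_Y+\alpha P_Y)=(1+\alpha)P_Y$, and Lemma \ref{l-NZD}(6) (movability forces $P_\sigma(D)=D$) gives $P_\sigma(K_Y+B_Y+\alpha P_Y)=N_Y+(1+\alpha)P_Y$. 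Comparing the two expressions yields $N_Y=0$, so $K_Y+B_Y=P_Y$ is nef, the LMMP stops at $Y$, and $(Y,B_Y)$ is the desired log minimal model. In other words, the ``delicate point on which the whole argument turns,'' which you leave as a hope, is resolved not by excluding infinite sequences of flips combinatorially but by forcing the negative part to be contracted; and this mechanism is only available after the $P$-triviality of the first point, which your version of the MMP does not have.
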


The assumption of the theorem is that on some resolution $f\colon W\to X$ we have the Nakayama-Zariski decomposition
$f^*(K_X+B)=P_\sigma+N_\sigma$ with $P_\sigma$ nef (in general $P_\sigma$ is only pseudo-effective).
For more details on the terminology and the proof see Section \ref{s-NZD}.

Our next result concerns the Fujita and CKM Zariski decompositions:

\begin{thm}\label{t-main-2}
Let $(X,B)$ be a projective lc pair such that $K_X+B$ is big. Then, the following are
equivalent:

(1) $K_X+B$ birationally  has a Fujita-Zariski decomposition;

(2) $K_X+B$ birationally  has a CKM-Zariski decomposition;

(3) $(X,B)$ has a log minimal model;
\end{thm}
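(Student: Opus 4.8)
The plan is to prove that each of (1) and (2) is equivalent to (3) by constructing the relevant Zariski decomposition out of a log minimal model for the forward directions, and by feeding the decomposition into Theorem~\ref{t-main-1} for the reverse directions. Thus the whole statement reduces to two comparison facts: first, that a log minimal model produces, on a common resolution, a decomposition which is simultaneously Fujita and CKM; and second, that for a big divisor a Fujita or CKM decomposition has positive part equal to the Nakayama positive part $P_\sigma$, so that $P_\sigma$ is nef and Theorem~\ref{t-main-1} applies.

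For $(3)\Rightarrow(1)$ and $(3)\Rightarrow(2)$, suppose $(X,B)$ has a log minimal model $(Y,B_Y)$, so that $K_Y+B_Y$ is nef. Choose a common resolution $p\colon W\to X$, $q\colon W\to Y$, and set $D:=p^*(K_X+B)$, $P:=q^*(K_Y+B_Y)$ and $N:=D-P$. By the definition of a log minimal model together with the negativity lemma, $N\ge 0$ and every component of $N$ is $q$-exceptional. Since $P$ is the pullback of a nef divisor it is nef, so $D=P+N$ is a decomposition into a nef plus an effective part. To see that it is a CKM decomposition, note that $q_*\mathcal O_W(mN)=\mathcal O_Y$ because $N$ is effective and $q$-exceptional, whence $H^0(W,mD)=H^0(Y,m(K_Y+B_Y))=H^0(W,mP)$ for all $m\ge 0$; this is exactly the CKM condition. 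To see that it is a Fujita decomposition, let $Q$ be any nef divisor on $W$ with $D-Q\ge 0$. Then $-(P-Q)=Q-q^*(K_Y+B_Y)$ is $q$-nef, and $q_*(P-Q)=q_*(D-Q)\ge 0$ since $N$ is $q$-exceptional; by the negativity lemma $P-Q\ge 0$, i.e. $Q\le P$. Hence $P$ is the largest nef subdivisor of $D$, which is the Fujita maximality. This gives both $(3)\Rightarrow(1)$ and $(3)\Rightarrow(2)$.

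For $(2)\Rightarrow(3)$, suppose $f\colon W\to X$ carries a CKM decomposition $f^*(K_X+B)=P+N$ with $P$ nef. I claim $N=N_\sigma$. The inequality $N_\sigma\le N$ is formal: since $P$ is nef we have $\sigma_\Gamma(P)=0$, and subadditivity of $\sigma_\Gamma$ together with $\sigma_\Gamma(N)\le\mult_\Gamma N$ gives $\sigma_\Gamma(f^*(K_X+B))\le\mult_\Gamma N$ for every prime divisor $\Gamma$. For the reverse inequality the CKM condition $H^0(mP)=H^0(mf^*(K_X+B))$ forces every member of $|mf^*(K_X+B)|$ to contain $mN$, so $\mult_\Gamma\Fix|mf^*(K_X+B)|\ge m\,\mult_\Gamma N$; passing to the limit and using that $\sigma_\Gamma$ of a big divisor equals the asymptotic multiplicity of the fixed part yields $\sigma_\Gamma(f^*(K_X+B))\ge\mult_\Gamma N$. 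Hence $N=N_\sigma$ and $P_\sigma=P$ is nef, so $K_X+B$ birationally has a Nakayama--Zariski decomposition with nef positive part and Theorem~\ref{t-main-1} gives a log minimal model. The implication $(1)\Rightarrow(3)$ is handled the same way: the easy inequality $N_\sigma\le N$ is identical, while the reverse inequality follows from the Fujita maximality of $P$ via the comparison between the Fujita and $\sigma$-decompositions (for instance by first upgrading a Fujita decomposition of a big divisor to a CKM one), again reducing to Theorem~\ref{t-main-1}.

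I expect the main obstacle to be precisely this last comparison in the reverse directions, namely proving $N_\sigma\ge N$ so that the Fujita (resp. CKM) positive part is genuinely the nef Nakayama positive part rather than merely dominating it. Bigness of $K_X+B$ is essential here: it is what lets us describe $\sigma_\Gamma$ through the asymptotic fixed parts of $|mf^*(K_X+B)|$ and guarantees that $P_\sigma$ remains big, and it is the property that makes the Fujita and CKM notions comparable. Once $P_\sigma$ is shown to be nef, everything is funneled through Theorem~\ref{t-main-1}, which does the actual model-building.
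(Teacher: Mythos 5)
Your argument is essentially correct, but it is not the paper's proof of this theorem --- it is, nearly verbatim, the paper's proof of the theorem that immediately follows it (the generalization from big to abundant $K_X+B$). The paper establishes the cycle (3)$\implies$(2)$\implies$(1)$\implies$(3), quoting [\ref{B-WZD}, Theorem 1.5] for the first implication and [\ref{Prokhorov}, Section 7] for the second, and then proves (1)$\implies$(3) \emph{without} passing through Theorem \ref{t-main-1}: it performs a dlt modification preserving the Fujita property, rationalizes $N$, runs a $P$-trivial LMMP on $K_X+B+\alpha P$ with scaling of an ample divisor (Theorem \ref{t-p-trivial-LMMP}), and on the resulting model uses bigness (writing $K_X+B\sim_\R A+L$) together with Fujita maximality to force $N_Y=0$ directly. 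Your route for the reverse implications --- prove $N=N_\sigma$ via the chain $F\ge N\ge N_\sigma=F$, where $F$ is the asymptotic fixed part, then invoke Theorem \ref{t-main-1} --- is exactly how the paper proves the abundant case, with [\ref{Lehmann}, Proposition 6.4] supplying $F=N_\sigma$; since big divisors are abundant, your argument is sound, it is shorter given Theorem \ref{t-main-1}, and it generalizes for free beyond bigness. What the paper's bespoke argument buys is independence from the theory of asymptotic fixed parts of $\R$-divisors. Your forward direction also differs usefully: by proving (3)$\implies$(1) directly with the negativity lemma, you close the equivalence without ever needing Prokhorov's CKM$\implies$Fujita implication, which is the harder of the two comparison directions (your (1)$\implies$(2), via resolving the linear systems and applying Fujita maximality to the mobile parts, is the easy one).

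Two points need patching. First, your verification of the Fujita property is incomplete as stated: you only test nef divisors $Q$ on $W$ itself with $D-Q\ge 0$, whereas the definition quantifies over all birational morphisms $g\colon V\to W$ with $g^*D=P'+N'$, $P'$ nef, $N'\ge 0$. The same argument does work --- $g^*P-P'=N'-g^*N$ is anti-nef over $Y$ (over $Y$ it is numerically equivalent to $-P'$), and $(q\circ g)_*(g^*P-P')=(q\circ g)_*N'\ge 0$ because $N$ is $q$-exceptional, so the negativity lemma applied over $Y$ gives $P'\le g^*P$ --- but this must be said, since maximality among decompositions on $W$ alone is strictly weaker than the Fujita condition. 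Second, since everything here is an $\R$-divisor you should carry the roundings: the CKM condition concerns $H^0(\rddown{mP})\to H^0(\rddown{mD})$, the pushforward statement should read $q_*\mathcal{O}_W(\rddown{mN})=\mathcal{O}_Y$, and the identity ``asymptotic fixed part $=N_\sigma$ for big $\R$-divisors'' deserves an explicit citation (Nakayama, or [\ref{Lehmann}, Proposition 6.4], which is what the paper itself uses). None of these causes real difficulty, but they are where the details live.
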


 The theorem is similar to a result of Kawamata [\ref{Kawamata-ZD}] for klt pairs. 
For the proof see Section \ref{s-FCKMZD}.

In order to prove the above theorems we consider a class of pairs beyond the traditional lc pairs.
We look at pairs $(X,B+P)$ in which $(X,B)$ is a usual projective pair and $P$ is a nef $\R$-divisor: we call the pair
a polarized pair. Besides this paper, polarized pairs appear in other contexts, eg the base point free theorem, 
canonical bundle formulas [\ref{Kawamata-subadjuntion}][\ref{Fujino-Mori}],    
proof of [\ref{BC}, Theorem 1.2], [\ref{Cacciola}], etc.
It is interesting to see if one can extend the birational geometry of usual pairs to the setting of
polarized pairs. For example, the cone and contraction theorems and the existence of log flips hold for polarized pairs.
One can ask whether one can run an LMMP on $K_X+B+P$ which terminates.
One can ask many other questions some of which are treated in Section \ref{s-p-pairs}.

\begin{conj}
Let $(X,B+P)$ be a $\Q$-factorial dlt polarized pair.
Then, we can run an LMMP on $K_X+B+P$ which terminates.
\end{conj}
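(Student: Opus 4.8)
The plan is to carry out the usual minimal model program step by step and reduce the conjecture to a termination statement. If $K_X+B+P$ is nef there is nothing to do, so assume otherwise. By the cone theorem for polarized pairs there is a $(K_X+B+P)$-negative extremal ray $R$; since $P$ is nef we have $(K_X+B)\cdot R\le (K_X+B+P)\cdot R<0$, so $R$ is automatically $(K_X+B)$-negative and its contraction is furnished by the ordinary contraction theorem. If this contraction is divisorial we pass to the induced $\Q$-factorial dlt polarized pair on the target; if it is small we replace $(X,B+P)$ by its flip, which exists by the assumed existence of log flips for polarized pairs. In either case the output is again a $\Q$-factorial dlt polarized pair, so the construction iterates and produces a well-defined LMMP.

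I would first dispose of the divisorial contractions: each one strictly lowers the Picard number, so at most $\rho(X)-1$ of them can occur in any sequence of steps, and after the last one every step is a flip. Thus the conjecture reduces to ruling out an infinite sequence of $(K_X+B+P)$-flips $X=X_0\bir X_1\bir\cdots$.

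For the termination of flips I would combine several tools. Special termination, available from the dlt structure, shows---granted termination in lower dimension---that after finitely many flips the flipping and flipped loci become disjoint from every stratum of $\lfloor B\rfloor$, reducing the problem to the klt interior. To terminate there I would run the program with scaling of a fixed ample divisor $A$, tracking the non-increasing thresholds $\lambda_i=\inf\{t\ge 0:K_{X_i}+B_i+P_i+tA_i\text{ is nef}\}$, and try to adapt the monotonicity and difficulty-function arguments from the theory of ordinary pairs. Here the main new subtlety is that the nef part need not remain nef after a flip, so $P$ must be recorded on a fixed high birational model and carried along as the pushforward of a fixed nef divisor, rather than as a divisor on each $X_i$. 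In the special case that $P$ is semiample one may instead perturb it to an honest boundary and reduce directly to the usual LMMP, and one can also try to feed the existence of minimal models obtained earlier (Theorems \ref{t-main-1} and \ref{t-main-2}, under the relevant Zariski decomposition hypotheses) into the scaling argument in order to force termination.

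The essential obstacle is precisely this last step: termination of an infinite sequence of flips is unknown for ordinary lc pairs outside low dimensions and special cases, so one cannot expect an unconditional proof for polarized pairs either. I would therefore aim first for the cases where the analogue for usual pairs is available---low dimension, the general type case, and termination with scaling under a bigness hypothesis---and concentrate the real work on controlling $P$ across flips, so that the special-termination induction and the scaling argument transfer essentially without change.
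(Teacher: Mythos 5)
This statement is an open conjecture in the paper, not a theorem: the paper offers no proof of it, and only establishes restricted versions (Theorem \ref{t-main-3}, via Theorems \ref{t-p-trivial-LMMP} and \ref{t-l-pairs-termination}) under two extra hypotheses --- that $P$ or $N=K_X+B-P$ is a $\Q$-divisor, or $N\ge 0$ with $\Supp N\subseteq \Supp B$, and, crucially, that every LMMP on $K_X+B$ terminates. Your proposal is likewise not a proof, and you say so yourself: the construction of the individual steps (cone, contraction, flips, reduction to an infinite sequence of flips by dropping the Picard number) is sound and matches the paper's setup in Section \ref{s-p-pairs}, but the whole content of the conjecture is the termination of that sequence of flips, and your final paragraph concedes that this step is out of reach. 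Naming tools (special termination, difficulty functions, scaling of an ample divisor) without carrying out any of the attendant monotonicity or induction arguments does not close this gap; in particular, special termination for polarized pairs already requires a terminating LMMP for polarized pairs in lower dimension, so as stated your reduction is circular rather than inductive on dimension.

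One concrete point where your plan diverges from what the paper actually develops: you worry that ``the nef part need not remain nef after a flip'' and propose to carry $P$ along as a pushforward from a fixed high model. The paper's main technical device addresses exactly this issue differently and more effectively: writing $P=\sum r_iP_i$ with $P_i$ $\Q$-Cartier and $r_i$ linearly independent over $\Q$ (Section 2), and invoking the bound on lengths of extremal rays, it shows that for $\alpha\gg 0$ any LMMP on $K_X+B+\alpha P$ is automatically $P$-trivial (Theorem \ref{t-p-trivial-LMMP}); hence $P$ stays nef along the run and the LMMP is simultaneously an LMMP on $K_X+B$, so termination is inherited from the usual pair whenever the latter is known (e.g.\ in dimension $\le 3$). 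If you want to make partial progress on the conjecture rather than restate its difficulty, this $P$-triviality mechanism, not scaling by an ample divisor, is the lever the paper provides.
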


By $(X,B+P)$ being $\Q$-factorial dlt we mean that $(X,B)$ is $\Q$-factorial dlt.
Our first result concerning the conjecture is the following:

\begin{thm}\label{t-main-3}
Let $(X,B+P)$ be a $\Q$-factorial dlt polarized pair.
Assume that either $P$ or $N=K_X+B-P$ is a $\Q$-divisor, or $N \geq 0$ with $\Supp N \subseteq \Supp B$.
 Moreover, assume that every LMMP on $K_X+B$ terminates.
Then, for any $\alpha\ge 0$ we can run an LMMP on $K_X+B+\alpha P$ which terminates.
Moreover, if $\alpha\gg 0$, then the LMMP is $P$-trivial.
\end{thm}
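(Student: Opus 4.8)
The plan is to realize an LMMP on $K_X+B+\alpha P$ as an LMMP on $K_X+B$ with scaling of $P$, and then to invoke the hypothesis that every LMMP on $K_X+B$ terminates. The key observation is that, as long as the current transform of $P$ is nef, every $(K_X+B+\alpha P)$-negative extremal ray $R$ satisfies $(K_X+B)\cdot R\le (K_X+B+\alpha P)\cdot R<0$, since $P$ is nef and $\alpha\ge 0$; hence $R$ is also $(K_X+B)$-negative. By the cone and contraction theorems and the existence of flips for dlt polarized pairs recalled in Section \ref{s-p-pairs}, the corresponding divisorial contraction or flip exists and is simultaneously a step of an LMMP on $K_X+B$. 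The only thing that can go wrong is that $P$ may fail to stay nef: a divisorial contraction always pushes $P$ forward to a nef divisor (by the projection formula, since $P\cdot R\ge 0$ on the contracted ray), and a flip of a $P$-trivial ray keeps $P$ nef, but the strict transform of $P$ under a flip with $P\cdot R>0$ becomes anti-ample over the flipping base and is no longer nef.

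To circumvent this I would run the process as an LMMP with scaling of $P$, decreasing the scaling coefficient from a large value down to $\alpha$. In an LMMP with scaling the divisor kept nef at each step is $K_X+B+\lambda_i P$ rather than $P$ itself, so the failure of nefness of $P$ after a $P$-positive flip is harmless. Each step contracts a $(K_X+B)$-negative ray $R_i$ with $(K_X+B+\lambda_i P)\cdot R_i=0$ and $\lambda_i>\alpha$; then $P\cdot R_i>0$ and
\[
(K_X+B+\alpha P)\cdot R_i=(\alpha-\lambda_i)\,P\cdot R_i<0,
\]
so every such step is genuinely a step of an LMMP on $K_X+B+\alpha P$. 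Since it is at the same time an LMMP on $K_X+B$, it terminates by hypothesis, ending at a model on which $K_X+B+\alpha P$ is nef.

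What remains, and what I expect to be the main obstacle, is to \emph{start} the scaling: one must first reach a model $Y$ on which $K_Y+B_Y+\mu P_Y$ is nef for some finite $\mu\ge\alpha$. This is exactly the problem of removing the $(K_X+B)$-negative rays on which $P$ is trivial, or on which $\tfrac{-(K_X+B)\cdot R}{P\cdot R}$ is unbounded, which obstruct $K_X+B+\lambda P$ from ever being nef. These rays can be contracted by $P$-trivial steps, which keep $P$ nef and again form an LMMP on $K_X+B$; combined with the scaling steps, the whole procedure is a single LMMP on $K_X+B$ and terminates. The finiteness of the threshold $\mu$ is where the hypotheses on $P$ and $N$ enter, via the boundedness of lengths of extremal rays (each carries a rational curve $C$ with $0<-(K_X+B)\cdot C\le 2\dim X$). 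When $P$ is a $\Q$-divisor, say $mP$ Cartier, then $P\cdot C>0$ forces $P\cdot C\ge 1/m$, so the threshold is at most $2m\dim X$; the cases in which $N$ is a $\Q$-divisor or $N\ge 0$ with $\Supp N\subseteq\Supp B$ I would reduce to a comparable discreteness statement by writing $K_X+B+\alpha P=(1+\alpha)\bigl(K_X+B-\tfrac{\alpha}{1+\alpha}N\bigr)$ and absorbing $N$ into the boundary. Finally, the last assertion follows from the same bound: once $\alpha$ exceeds this finite threshold, no scaling steps with $\lambda_i>\alpha$ beyond the $P$-trivial ones are needed, so the entire LMMP consists of $P$-trivial steps and is therefore $P$-trivial.
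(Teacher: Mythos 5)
Your overall architecture is the same as the paper's: first reach, by $P$-trivial steps, a model on which $K_X+B+\beta P$ is nef for some large $\beta$, then run an LMMP on $K_X+B$ with scaling of $P$ down to $\alpha$; every step is simultaneously a step of an LMMP on $K_X+B$, so termination follows from the hypothesis. Your treatment of the case where $P$ is a $\Q$-divisor (discreteness of $P\cdot\Gamma$ via the Cartier index $m$) is also the paper's. But there is a genuine gap in the other two cases. Your proposed reduction --- writing $K_X+B+\alpha P=(1+\alpha)\bigl(K_X+B-\tfrac{\alpha}{1+\alpha}N\bigr)$ and ``absorbing $N$ into the boundary'' --- produces no discreteness: the divisor $K_X+B-\tfrac{\alpha}{1+\alpha}N$ is just $\tfrac{1}{1+\alpha}(K_X+B+\alpha P)$, so its irrational part is exactly the original problem; moreover $B-\tfrac{\alpha}{1+\alpha}N$ need not be a boundary at all ($N$ is not assumed effective when it is merely a $\Q$-divisor, and even when $N\ge 0$ with $\Supp N\subseteq \Supp B$ its coefficients can exceed those of $B$), so there is no lc pair to which the length-of-extremal-rays bound can be applied. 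Without a discreteness statement for $P\cdot\Gamma$ you cannot exclude extremal curves with $P\cdot\Gamma$ positive but arbitrarily small, which is precisely what the finiteness of your threshold $\mu$ and the final $P$-triviality assertion rest on. The paper's key device, absent from your proposal, is the rational decomposition of Section 2: $P=\sum r_iP_i$ with $P_i$ $\Q$-Cartier, $r_i>0$ linearly independent over $\Q$, $\Supp(P-P_i)\subseteq \Supp B$, and each $(X,B_i)$ lc where $B_i:=P_i+N-K_X$. Then, for an extremal curve $\Gamma$ of a $(K_X+B+\alpha P)$-negative ray one has $N\cdot\Gamma<0$, and the length bound applied to the lc pairs $(X,B_i)$ gives $P_i\cdot\Gamma\ge -2\dim X$ for every $i$; writing $P\cdot\Gamma=\sum r_i n_i/m$ with $n_i\in\Z$ bounded below, the linear independence of the $r_i$ over $\Q$ forces $P\cdot\Gamma=0$ or $P\cdot\Gamma\ge\mu>0$ with $\mu$ independent of the step.

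A second gap: in the scaling phase you take for granted that each step contracts an extremal ray $R_i$ with $(K_X+B)\cdot R_i<0$ and $(K_X+B+\lambda_i P)\cdot R_i=0$, i.e.\ that the nef threshold is attained by an extremal ray. Under the hypotheses of the theorem (only termination of LMMPs on $K_X+B$ is assumed --- no finiteness of models) this is not automatic; the paper proves it as a separate Claim, by contradiction: if the thresholds $t_i$ increased strictly to $\beta$, then finiteness of the set $\{(K_X+B)\cdot\Gamma_i\}$ of intersection numbers with extremal curves (cf.\ [B-II, Remark 3.1]) combined with case-specific discreteness (the Cartier index of $P$ or of $N$, or, in the third case, a small $\Q$-divisor perturbation $N'$ of $N$, after first replacing $B$ by $B-\epsilon N$, so that $(X,P+N'-K_X)$ is lc) shows that $P\cdot\Gamma_i$ takes only finitely many values, whence $t_i=\beta$ for $i\gg 0$, a contradiction. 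Your argument needs this statement, or a substitute for it, merely to define the scaling steps; it is a second place where the hypotheses on $P$ and $N$ must actually be used rather than only cited.
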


Here $P$-trivial means that $P$ is numerically trivial on every extremal ray in the
process, in particular, this means that the nefness of $P$ is preserved in this case.

\begin{cor}\label{c-dim-3}
Let $(X,B+P)$ be a $\Q$-factorial dlt polarized pair of dimension $\le 3$.
Assume that either $P$ or $N=K_X+B-P$ is a $\Q$-divisor, or $N \geq 0$ with $\Supp N \subseteq \Supp B$.
Then, for any $\alpha\ge 0$ we can run an LMMP on $K_X+B+\alpha P$ which terminates.
Moreover, if $\alpha\gg 0$, then the LMMP is $P$-trivial.
\end{cor}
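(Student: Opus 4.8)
The plan is to obtain the statement as an immediate application of Theorem \ref{t-main-3}. The hypotheses of the corollary are word-for-word those of Theorem \ref{t-main-3}, except that the blanket assumption ``every LMMP on $K_X+B$ terminates'' has been replaced by the dimension bound $\dim X\le 3$. Consequently the entire argument reduces to a single point: verifying that, in dimension at most three, every LMMP on $K_X+B$ does in fact terminate. Once this is secured, the conclusion---that for every $\alpha\ge 0$ there is a terminating LMMP on $K_X+B+\alpha P$, which is moreover $P$-trivial when $\alpha\gg 0$---is read off directly from Theorem \ref{t-main-3} with no further work.

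To supply the termination input, I would first record that an LMMP on $K_X+B$ is a sequence of $K_X+B$-flips and divisorial contractions of $K_X+B$-negative extremal rays, and that each such step preserves both the dimension and the property of being $\Q$-factorial dlt. Hence the whole sequence stays within the class of $\Q$-factorial dlt pairs of dimension $\le 3$, and its termination is precisely the classical termination of three-dimensional log flips. I would cite the standard three-dimensional termination results for this. When $B$ has irrational coefficients, I would reduce to the rational case in the usual manner: express $K_X+B$ as a positive combination of finitely many boundaries with rational coefficients whose associated extremal rays agree with those appearing in the sequence, and deduce termination of the given sequence from termination for the rational boundaries.

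The only step demanding genuine attention---and the sole place where the hypothesis $\dim X\le 3$ enters---is this appeal to three-dimensional termination, together with the mild real-coefficient reduction just described. Beyond that there is no essential obstacle: every remaining ingredient, namely the construction of the LMMP on $K_X+B+\alpha P$, its termination, and its eventual $P$-triviality for $\alpha\gg 0$, is furnished verbatim by Theorem \ref{t-main-3}. Thus the corollary is in substance a specialization of that theorem to the range of dimensions in which its standing termination hypothesis is known unconditionally.
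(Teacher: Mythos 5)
Your proposal is correct and follows exactly the paper's own argument: the corollary is deduced from Theorem \ref{t-main-3} by observing that termination of the LMMP on $K_X+B$ is known unconditionally in dimension $\le 3$. The paper states this in one line; your additional remarks on preserving the $\Q$-factorial dlt class and handling real coefficients are sound elaborations of the same reduction.
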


Most probably the $\Q$-divisor condition in these results is unnecessary.
In Section \ref{s-p-pairs} we also touch upon limiting pairs which are more general than polarized pairs.
The following theorem shows that Conjectures 2 and 2b of [\ref{Cacciola}] hold. 

\begin{thm}\label{t-main-4}
Let $(X,B+P)$ be a lc polarized pair where $B,P$ are $\Q$-divisors, and assume that 
$K_X+B+P$ is big. Further assume that the augmented base locus 
${\bf{B}}_+(K_X+B+P)$ does not contain any lc centre of $(X,B)$. If $K_X+B+P$ birationally 
has a CKM-Zariski decomposition, then   
the section ring $R(K_X+B+P)$ is a finitely generated $k$-algebra.   
\end{thm}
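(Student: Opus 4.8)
The plan is to eliminate the nef divisor $P$ by absorbing it into the boundary, thereby reducing the assertion to Theorem \ref{t-main-2} for a genuine log canonical pair whose log canonical divisor is big and carries a CKM-Zariski decomposition. Since finite generation of a section ring is preserved under $\Q$-linear equivalence and under passing to a positive rational multiple, it suffices to produce a boundary $B'\ge 0$ with $(X,B')$ lc, $K_X+B'\sim_\Q \lambda (K_X+B+P)$ for some $\lambda>0$, and $K_X+B'$ big and birationally CKM. Theorem \ref{t-main-2} then gives a log minimal model of $(X,B')$; on it $K_X+B'$ is nef and big, hence semiample by the base point free theorem for lc pairs, so $R(K_X+B')$, and therefore $R(K_X+B+P)$, is finitely generated.

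To build $B'$ I would use bigness to write $K_X+B+P\sim_\Q A+E$ with $A$ ample and $E\ge 0$, and exploit that $P$ is nef so that $P+tA$ is ample for every $t>0$. Taking a general member $H\sim_\Q P+tA$ and setting $B':=B+tE+H$ gives $K_X+B'\sim_\Q (1+t)(K_X+B+P)$, which is big; moreover the CKM-Zariski decomposition of $K_X+B+P$ scales to one of $K_X+B'$. The content of the hypothesis that $\mathbf{B}_+(K_X+B+P)$ contains no lc centre of $(X,B)$ is precisely what should make $(X,B')$ log canonical: the fixed part $E$ can be chosen to avoid the lc centres, while $H$, being general in an ample (hence base point free after scaling) system, preserves log canonicity, so for $0<t\ll 1$ the pair $(X,B')$ is lc.

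I expect the main obstacle to be the control of log canonicity along lc centres in the construction of $B'$. The hypothesis guarantees only that no lc centre is contained in $\mathbf{B}_+(K_X+B+P)$, not that the fixed part is disjoint from the whole non-klt locus, so an lc centre meeting but not contained in the augmented base locus can still obstruct the naive perturbation. As in the proof of the base point free theorem for lc pairs, I would handle this by induction on dimension: restrict to the minimal lc centres, where the $\mathbf{B}_+$ hypothesis forces the restriction of $K_X+B+P$ to remain big and to inherit a CKM-Zariski decomposition, apply the statement in lower dimension through adjunction, and lift the resulting semiampleness from the non-klt locus to $X$. Checking that bigness and the CKM-Zariski decomposition descend compatibly to the lc centres, and that semiampleness lifts, is the delicate step; once it is in place, the reduction to Theorem \ref{t-main-2} and the final appeal to base point freeness are routine.
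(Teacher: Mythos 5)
Your reduction to the case $P=0$ is essentially the paper's own first step, but the obstacle you flag in your last paragraph is not a real one. Since there are only finitely many lc centres and none is contained in $\mathbf{B}_+(K_X+B+P)$, a general member of a suitable linear system yields a single decomposition $K_X+B+P\sim_\Q A+L$ with $A$ ample, $L\ge 0$ and $\Supp L$ containing no lc centre; and it is a standard fact that an lc pair $(X,B)$ remains lc after adding $\epsilon L$ for $0<\epsilon \ll 1$ whenever $\Supp L$ contains no lc centre: on a log resolution of $(X,B+L)$, every divisor with log discrepancy $0$ over $(X,B)$ has centre an lc centre, hence has multiplicity $0$ in the pullback of $L$, while the remaining finitely many relevant coefficients are bounded away from $1$. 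So no induction on dimension is needed for the perturbation, and one can also arrange that the lc centres of the new pair are exactly those of $(X,B)$.

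The genuine gap is your final step: ``on it $K_X+B'$ is nef and big, hence semiample by the base point free theorem for lc pairs.'' There is no such theorem. The base point free theorem for lc pairs requires $aL-(K_X+B)$ to be \emph{ample} (or nef and log big), not merely nef and big, and with $L=K_X+B'$ this difference is fatal: nef and big lc log divisors are not known to be semiample in general, and the obstruction sits precisely on lc centres where the restriction of the divisor fails to be big (compare the paper's finite generation counterexample in Section \ref{s-p-pairs}, where a nef and big polarized log divisor restricts to a degree-zero non-torsion divisor on an lc centre). What is actually needed is that on the final model the divisor is nef and \emph{log big}, i.e.\ big with big restriction to every lc centre; then Fujino--Gongyo [\ref{Fujino-Gongyo}, Theorem 4.2] gives semiampleness. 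To reach that position the paper must (i) run an honest LMMP with scaling, whose termination at a weak lc model follows from Theorem \ref{t-main-2} combined with [\ref{B-lc-flips}, Theorem 1.9], and (ii) prove that the condition ``$\mathbf{B}_+$ contains no lc centre'' is \emph{preserved} at every flip and divisorial contraction of this LMMP --- this is the key technical point of the paper's proof, carried out by expressing the flipped divisor as $\epsilon C^++F^++L^+$ with $C^+$ ample and effective (the flipped divisor plus a pullback of an ample divisor from the base of the flipping contraction) and with supports avoiding the lc centres; it is entirely absent from your proposal. Your alternative sketch, adjunction to minimal lc centres and lifting semiampleness from the non-klt locus, cannot substitute for it: lifting semiampleness presupposes that the divisor is already nef on $X$, which it is not before any LMMP has been run, and making that induction rigorous would amount to reproving the Fujino--Gongyo theorem rather than quoting it.
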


The proof is given in Section \ref{s-FCKMZD}. If $(X,B+P)$ is $\Q$-factorial dlt, the 
proof is trivial so the difficulty has to do with lc singularities. 

In the final section of this paper, 
Section \ref{s-WZD}, we outline a strategy to show that: if every pseudo-effective lc pair of 
dimension $\le d$ has a weak Zariski decomposition, then every such pair has a log minimal model. This
is an attempt to remove the termination assumption in  the main theorem of [\ref{B-WZD}].\\

\textbf{Acknowledgements.} The first author was supported by a Leverhulme grant. The second author was supported by the EPSRC grant EP/I004130/1. The authors would like to
thank Yoshinori Gongyo for useful discussions.

\section{Preliminaries}

Let $k$ be an algebraically closed field of characteristic zero fixed throughout the paper. 
All the varieties will be over $k$ unless stated otherwise. For an $\R$-divisor $M=\sum m_iM_i$ on some  
variety, we define $||M||=\max\{m_i\}$. \\

\textbf{Pairs.} 
A \emph{pair} $(X,B)$ consists of a normal quasi-projective variety $X$ and an $\R$-divisor $B$ on $X$ with
coefficients in $[0,1]$ such that $K_X+B$ is $\mathbb{R}$-Cartier. 
For a prime divisor $D$ on some birational model of $X$ with a
nonempty centre on $X$, $a(D,X,B)$
denotes the log discrepancy. For definitions and standard results on singularities of pairs 
we refer to [\ref{Kollar-Mori}].\\

\textbf{Log minimal models and Mori fibre spaces.}
A projective pair $(Y,B_Y)$ is a \emph{log birational model} of a projective pair $(X,B)$ if we are given a birational map
$\phi\colon X\bir Y$ and $B_Y=B^\sim+E$ where $B^\sim$ is the birational transform of $B$ and 
$E$ is the reduced exceptional divisor of $\phi^{-1}$, that is, $E=\sum E_j$ where $E_j$ are the
exceptional/$X$ prime divisors on $Y$. 
A log birational model $(Y,B_Y)$ is a  \emph{weak lc model} of $(X,B)$ if

$\bullet$ $K_Y+B_Y$ is nef, and

$\bullet$ for any prime divisor $D$ on $X$ which is exceptional/$Y$, we have
$$
a(D,X,B)\le a(D,Y,B_Y)
$$

 A weak lc model $(Y,B_Y)$ is a \emph{log minimal model} of $(X,B)$ if 

$\bullet$ $(Y,B_Y)$ is $\Q$-factorial dlt,

$\bullet$ the above inequality on log discrepancies is strict.\\

On the other hand, a log birational model $(Y,B_Y)$  is called a \emph{Mori fibre space} of $(X,B)$ if 

$\bullet$ $(Y,B_Y)$ is $\Q$-factorial dlt, 

$\bullet$ there is a $K_Y+B_Y$-negative extremal contraction $Y\to T$ 
with $\dim Y>\dim T$, and 

$\bullet$ for any prime divisor $D$ (on birational models of $X$) we have 
$$
a(D,X,B)\le a(D,Y,B_Y)
$$
 and strict inequality holds if $D$ is 
on $X$ and contracted$/Y$.\\

Note that our definitions of log minimal models and Mori fibre spaces are slightly different 
from the traditional definitions in that we allow $\phi^{-1}$ to contract certain divisors.\\

\textbf{Rational decomposition of certain divisors.} 
Let $(X,B)$ be a $\Q$-factorial lc pair such that $K_X+B=P+N$ where $P$ is an $\R$-divisor and $N$ is a $\Q$-Cartier. 
For each $\delta>0$, we will show that there is a decomposition $P=\sum r_iP_i$ satisfying the following:

$\bullet$ $P_i$ are $\Q$-Cartier, 

$\bullet$ $r_i$ are positive and linearly independent over $\Q$, 

$\bullet$ $||P-P_i||<\delta$ and $\Supp (P-P_i)\subseteq \Supp B$, and 

$\bullet$ $(X,B_i)$ is lc where $B_i:=P_i+N-K_X$.
 
Let $V$ be the $\R$-vector space generated by the components of $K_X,B,P,N$. In what follows a \emph{rational 
vector space} (inside $V$) means an $\R$-vector space generated by a collection of $\Q$-divisors, 
and a \emph{rational affine space} (inside $V$) means an affine space spanned by a collection of $\Q$-divisors. 

Let $W\subseteq V$ be the 
smallest rational affine space containing $P$, and let $U$ be the vector space generated by the elements of $W$. 
If $Q\in W$, then $P-Q$ is supported on the irrational part of $P$, and since $N$ is a $\Q$-divisor 
the irrational part of $P$ is supported in the irrational part of $B$ hence $\Supp(P-Q)\subseteq \Supp B$.

First assume that $W\neq U$. Then, we can write $P=\sum_1^n r_iP_i$ where $P_i\in W$ are $\Q$-divisors, $r_i>0$, 
and $n=\dim W+1=\dim U$. The $r_i$ are 
linearly independent over $\Q$: otherwise we have $\sum a_ir_i=0$ for certain $a_i\in \Q$ (say $a_n=1$) 
hence $P=\sum_1^{n-1}r_iP_i'$ where $P_i'=P_i-a_iP_n$, and this implies that 
$P$ belongs to a rational vector space strictly smaller than $U$, a contradiction.
Now assume that $W=U$. In this case, we can choose $\Q$-divisors $P_i$ which form a basis of $W$ 
and such that $P$ belongs to the convex cone generated by the $P_i$, that is, $P=\sum_1^n r_iP_i$ 
for certain $r_i>0$. The $r_i$ are linearly independent over $\Q$ otherwise $P$ would belong to a 
rational vector space strictly smaller than $W$, a contradiction. In both cases $W\neq U$ and $W=U$, 
it is obvious that we can choose the $P_i$ so that $||P-P_i||<\delta$.

It remains to show that the $P_i$ can be chosen so that $(X,B_i)$ is lc where $B_i:=P_i+N-K_X$.  
Put $W'=W+N-K_X$ which is a rational affine space containing $B$. By Shokurov [\ref{Sh-log-models}], the set 
$$
\mathcal{L}=\{\Delta \in W' \mid (X,\Delta) ~~\mbox{is lc}\}
$$
is a rational polytope. By our choice of $W$, $B$ belongs to the interior of $\mathcal{L}$: 
otherwise $B$ would belong to some proper face of $\mathcal{L}$ hence there would be a rational affine space $T'\subsetneq W'$ containing $B$ which in turn implies $P$ belongs to the rational affine space $T:=T'+K_X-N\subsetneq W$, a contradiction. The same reasoning shows that $\dim \mathcal{L}=\dim W$. 
Therefore, if we choose $P_i$ so that $||P-P_i||=||B-B_i||$ is sufficiently small, 
then each $B_i$ belongs to the interior of $\mathcal{L}$ hence in particular each $(X,B_i)$ is lc.

\vspace{0.3cm}
\section{Polarized pairs}\label{s-p-pairs}

A \emph{polarized pair} is of the form $(X,B+P)$ where $(X,B)$ is a projective pair in the usual sense
and $P$ is a nef $\R$-divisor on $X$.
We will call $K_X+B+P$ a \emph{polarized log divisor}.
We say that a polarized pair $(X,B+P)$ is lc (dlt, etc) if $(X,B)$ is lc (resp. dlt, etc).
In this section, we will look at various questions concerning polarized pairs.
Essentially we would like to know how much of the birational geometry of usual pairs
can be extended to polarized pairs.\\

\textbf{Cone and contraction, flips, LMMP}. For simplicity assume that $(X,B)$ is dlt. We can run the LMMP
on $K_X+B+P$ as follows. Let $R$ be a $K_X+B+P$-negative extremal ray.
Pick an ample $\R$-divisor $A$ such that $(K_X+B+P+A)\cdot R<0$. We have
$K_X+\Delta\sim_\R K_X+B+P+A$ for some boundary $\Delta$ so that $(X,\Delta)$ is dlt.
Then, by the cone theorem $R$ can be contracted. This shows that the cone and contraction theorems
hold for polarized pairs. Moreover, the log flip exists if $R$ defines
a flipping contraction. We can continue the process, that is, if we have already obtained a
model $X'$ and if we have a $K_{X'}+B'+P'$-negative extremal ray $R'$, then perhaps after replacing
$A$ we can  make sure that $(K_{X'}+B'+P'+A')\cdot R'<0$ and that $(X,\Delta')$ is dlt where $'$ denotes birational
transform. So, $R'$ can be
contracted, etc.  The process gives an LMMP on $K_X+B+P$.

\begin{quest}
Does the LMMP just defined terminates?
\end{quest}

Under some mild assumptions, we will show that at least some
LMMP on $K_X+B+P$ terminates if every LMMP on $K_X+B$ terminates. In particular, we can
apply this in dimension $\le 3$ since the latter termination is known.

\begin{thm}\label{t-p-trivial-LMMP}
Let $(X,B+P)$ be a $\Q$-factorial dlt polarized pair.
Assume that either $P$ or $N=K_X+B-P$ is a $\Q$-divisor, or $N \geq 0$ with $\Supp N \subseteq \Supp B$.
Then, for any $\alpha\gg 0$ any LMMP on $K_X+B+\alpha P$ is $P$-trivial.
\end{thm}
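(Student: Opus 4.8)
The plan is to fix a single large $\alpha$ and prove, by induction on the steps of a given LMMP on $K_X+B+\alpha P$, that every step is $P$-trivial, with a threshold on $\alpha$ that is independent of the step. Write $X=X_0\dashrightarrow X_1\dashrightarrow\cdots$ and let $B_j,P_j$ be the birational transforms of $B,P$. Assume the first $j$ steps are $P$-trivial; then $P_j$ is still nef, as recorded after Theorem \ref{t-main-3}. Let $R$ be the $(K_{X_j}+B_j+\alpha P_j)$-negative extremal ray contracted at the next step. Since $P_j$ is nef, $R$ is also $(K_{X_j}+B_j)$-negative, so by the length of extremal rays it is generated by a curve $C$ with $0<-(K_{X_j}+B_j)\cdot C\le 2\dim X$; as $C\in R$ we also have $(K_{X_j}+B_j+\alpha P_j)\cdot C<0$, hence $\alpha\,(P_j\cdot C)<-(K_{X_j}+B_j)\cdot C\le 2\dim X$. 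Thus it is enough to find a positive lower bound $\epsilon$, \emph{independent of $j$}, for $P_j\cdot C$ whenever $P_j\cdot C>0$: then any $\alpha>2\dim X/\epsilon$ forces $P_j\cdot C=0$ and the step is $P$-trivial.

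The key observation making $\epsilon$ uniform is that $P$-trivial steps do not change the pullback of $P$. Let $p\colon W\to X$ and $q\colon W\to X_j$ be a common resolution. At each of the first $j$ steps $P$ is numerically trivial over the base of the relevant contraction, hence equal to its pullback there, and so the pullbacks to $W$ agree: $p^*P=q^*P_j$. Computing $P_j\cdot C$ on $W$ via the projection formula therefore expresses it as an intersection number of the \emph{fixed} divisor $p^*P$ with a curve $\tilde C$ on $W$ mapping to $C$. When $P$ is a $\Q$-divisor with, say, $mP$ Cartier on $X$, the divisor $p^*(mP)$ is an honest line bundle on $W$, so these intersection numbers lie in $\tfrac1m\Z$ uniformly in $j$; this gives $\epsilon=1/m$ and settles the case where $P$ is a $\Q$-divisor. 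Note that although the Cartier index of $P_j$ on $X_j$ may well grow along the flips, the intersection numbers $P_j\cdot C$ do not, which is exactly what the pullback identity exploits.

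For the remaining hypotheses I would reduce to the rational case through the rational decomposition of Section 2. When $N=K_X+B-P$ is a $\Q$-divisor that construction produces $P=\sum_{i=1}^n r_iP_i$ with $P_i$ $\Q$-Cartier, with $r_i>0$ linearly independent over $\Q$, with $\Supp(P-P_i)\subseteq\Supp B$, and with $(X,B_i)$ lc for $B_i:=P_i+N-K_X$. The same computation on $W$ gives $P_j\cdot C=\sum_i r_i\,(p^*P_i\cdot\tilde C)$ with each $p^*P_i\cdot\tilde C\in\tfrac1m\Z$, and since the $r_i$ are linearly independent over $\Q$ one has $P_j\cdot C=0$ precisely when every $p^*P_i\cdot\tilde C$ vanishes. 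The case $N\ge 0$ with $\Supp N\subseteq\Supp B$ I would treat by rewriting $K_X+B+\alpha P=(1+\alpha)(K_X+B)-\alpha N$ and comparing the process with a $(K_X+B)$-MMP, using the support condition to keep track of positivity.

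The step I expect to be the main obstacle is the passage, in the genuinely real case, from ``$P_j\cdot C$ is a small nonnegative number'' to ``$P_j\cdot C=0$''. Linear independence of the $r_i$ over $\Q$ alone is insufficient, since a small \emph{positive} value $\sum_i r_i t_i$ with $t_i\in\tfrac1m\Z$ is perfectly possible once the $t_i=p^*P_i\cdot\tilde C$ are allowed to be large. What must be controlled is a uniform bound on the $t_i$ over all models $X_j$; here I would use $\Supp(P-P_i)\subseteq\Supp B$ together with $p^*P=q^*P_j$ to write $t_i=P_j\cdot C-(P-P_i)\cdot p_*\tilde C$ and to deduce that only finitely many values of $\sum_i r_it_i$ occur in the relevant bounded range, whence a positive $\epsilon$ exists. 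Establishing this boundedness uniformly in $j$ is the technical heart of the argument.
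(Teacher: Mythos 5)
Your argument for the case where $P$ itself is a $\Q$-divisor is correct and essentially the paper's: once $P\cdot R=0$ is forced, nefness and the relevant Cartier/pullback structure descend through the step, so the induction runs (the paper keeps $mP$ Cartier by descent along the contraction rather than via a common resolution, but the two are interchangeable). The problem is the other two cases, which are the real content of the theorem, and there the gap you flag at the end is genuine and your proposed repair will not close it. Bounding $t_i$ via $t_i=P_j\cdot C-(P-P_i)\cdot p_*\tilde C$ requires a bound on $(P-P_i)\cdot p_*\tilde C$, i.e.\ on the intersection of a \emph{fixed} divisor on $X$ with the transform on $X$ of an extremal curve living on $X_j$; nothing controls these numbers uniformly in $j$ (flips create curves whose transforms meet $\Supp B$ arbitrarily positively), and the support condition $\Supp(P-P_i)\subseteq\Supp B$ gives no leverage here.

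What actually closes the gap is the lc condition on the auxiliary pairs $(X,B_i)$, which you carry in your setup but never use. On any ray $R$ with $(K_{X_j}+B_j+\alpha P_j)\cdot R<0$ one has $N_j\cdot R<0$ as well (since $N_j=K_{X_j}+B_j-P_j$ and $P_j$ is nef). If $P_{i,j}\cdot \Gamma<0$ for an extremal curve $\Gamma$ of $R$, then $(K_{X_j}+B_{i,j})\cdot \Gamma=P_{i,j}\cdot \Gamma+N_j\cdot \Gamma<0$, so Kawamata's boundedness of the length of extremal rays [\ref{Kawamata-ray}] applied to the \emph{lc} pair $(X_j,B_{i,j})$ gives $(K_{X_j}+B_{i,j})\cdot \Gamma\ge -2\dim X$, hence $P_{i,j}\cdot \Gamma=(K_{X_j}+B_{i,j})\cdot \Gamma-N_j\cdot \Gamma>-2\dim X$. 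This is the uniform lower bound on the $t_i$; combined with $0\le\sum r_it_i<2\dim X/\alpha$ and $r_i>0$ it also bounds each $t_i$ above, so only finitely many tuples $(t_i)\in(\tfrac1m\Z)^n$ occur and the positive gap $\mu$ exists. Two further points are needed to make this an induction. First, the same curve $\Gamma$ must satisfy the length bound for all the pairs $(X_j,B_{i,j})$ simultaneously, which is why one takes $\Gamma$ to be an extremal curve in the sense of [\ref{B-II}, Section 3] rather than a curve chosen per pair as in your second paragraph. Second, you must strengthen the inductive hypothesis from ``$P$-trivial'' to ``$P_i$-trivial for every $i$'' (this is exactly what linear independence of the $r_i$ delivers at each step): it is needed not only to keep $mP_i$ Cartier (equivalently, to have $p^*P_i\equiv q^*P_{i,j}$), but, crucially, to keep $(X_j,B_{i,j})$ lc along the LMMP — each step is $(K+B_i)$-negative precisely because $(K_{X_j}+B_{i,j})\cdot \Gamma=N_j\cdot \Gamma<0$ once $P_{i,j}\cdot \Gamma=0$ — so that Kawamata's bound can be re-applied on the next model. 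Finally, your treatment of the third case ($N\ge 0$, $\Supp N\subseteq\Supp B$) is only a sentence and not an argument; the paper reduces it to the previous mechanism by perturbing $N$ to a nearby $\Q$-divisor $N'$ and running the same estimate with the lc pairs $(X,B_i-\epsilon N)$, where $B_i-\epsilon N\le B_i':=P_i+N'-K_X$.
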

\begin{proof}
Here $P$-trivial means that $P$ is numerically trivial on every extremal ray in the
process. First assume that $N=K_X+B-P$ is a $\Q$-divisor.
By the arguments in Section 2, we can write $P=\sum r_iP_i$ where  $P_i$ are 
$\Q$-divisors, $r_i>0$ are linearly independent over $\Q$, and each $(X,B_i)$ is lc.

Pick $\alpha>0$ and let $R$ be an extremal ray such that $(K_X+B+\alpha P)\cdot R<0$.
Then, $(K_X+B)\cdot R<0$ and $N\cdot R<0$.
Let $\Gamma$
be an extremal curve generating $R$ ($\Gamma$ is a curve generating $R$ having minimal degree with
respect to some ample divisor, see [\ref{Sh-log-models}][\ref{Sh-ordered}] or [\ref{B-II}, Section 3] for more details). 
Then, as a consequence of the boundedness of the length of extremal rays [\ref{Kawamata-ray}] 
we have $(K_X+B)\cdot \Gamma\ge -2\dim X$ which implies that
$$
\alpha P\cdot \Gamma <-(K_X+B)\cdot \Gamma\le 2\dim X
$$
On the other hand, if $P_i\cdot \Gamma<0$, then $(K_X+B_i)\cdot \Gamma<0$ and
$$
P_i\cdot \Gamma=(K_X+B_i)\cdot \Gamma-N\cdot \Gamma>(K_X+B_i)\cdot \Gamma\ge -2\dim X
$$
Thus, $P_i\cdot \Gamma\ge -2\dim X$ for every $i$.
Now pick a positive integer $m$ such that $mP_i$ is Cartier for every $i$. Then,
$$
P\cdot \Gamma=\sum r_iP_i\cdot \Gamma=\sum r_i\frac{n_i}{m}
$$
where $n_i$ are integers and $n_i\ge -2m\dim X$. This implies that $P\cdot \Gamma=0$ or 
$P\cdot \Gamma>\mu$ where $\mu>0$
only depends on $m$ and $r_i$. In particular, if $\alpha\gg 0$ (depending only on $m,r_i$), then from
$\alpha P\cdot \Gamma<2\dim X$ we deduce that $P\cdot \Gamma=0$.

From now on we fix $\alpha\gg 0$. By construction, $\sum r_iP_i\cdot \Gamma=P\cdot \Gamma=0$.
Since the $r_i$ are linearly independent over $\Q$, we have $P_i\cdot \Gamma=0$ for
every $i$. In particular, $(K_X+B_i)\cdot \Gamma=N\cdot \Gamma<0$ for each $i$. 
If $R$ defines a Mori fibre space, then the LMMP stops. Otherwise, let $X\bir X'$ be the
divisorial contraction or the flip associated to $R$. Then, $K_{X'}+B'=P'+N'$,
$P'$ is nef, $mP_i$ is Cartier, etc,
so we can apply the above arguments on $X'$
and in this way obtain an LMMP on $K_X+B+\alpha P$ which is $P$-trivial in every step.

Now we treat the case when $P$ is a $\Q$-divisor.
Pick a sufficiently large number $\alpha$. Let $m>0$ be an integer such that $mP$ is Cartier.
Let $R$ be an extremal ray such that $(K_X+B+\alpha P)\cdot R<0$ and let $\Gamma$ be an
extremal curve generating $R$. Then, $\alpha P\cdot \Gamma=\alpha \frac{n}{m}$ for some
integer $n\ge 0$ and from
$$
\alpha P\cdot \Gamma <-(K_X+B)\cdot \Gamma\le 2\dim X
$$
we deduce that $P\cdot \Gamma=0$ since $\alpha$ is sufficiently large. The rest of the argument
goes as before.

Now we come to the third case, that is, when $N\ge 0$ and $\Supp N\subseteq \Supp B$. 
First pick a sufficiently small $\epsilon>0$ and let $N'$ be a $\Q$-divisor such that we have 
$(1-\epsilon)N\le N'\le N$. Put $B'=P+N'-K_X$. Then, $(X,B')$ is dlt. By the constructions in Section 2, 
we can write $P=\sum r_iP_i$ where $P_i$ are $\Q$-divisors, $r_i>0$ are linearly independent 
over $\Q$, $||P-P_i||$ are sufficiently small with $P-P_i$ supported in $\Supp B'=\Supp B$, 
and $(X,B_i')$ are lc where $B_i':=P_i+N'-K_X$.  
Let $B_i:=P_i+N-K_X$. Although $(X,B_i)$ may not be lc but $(X,B_i-\epsilon N)$ is lc 
because $B_i-\epsilon N\le B_i'$. 
Now let $R$ be an extremal ray such that 
$(K_X+B+\alpha P)\cdot R<0$ and let $\Gamma$ be an extremal curve generating $R$. 
Then, 
$$
P_i\cdot \Gamma=(K_X+B_i-\epsilon N) \cdot \Gamma - (1-\epsilon) N\cdot \Gamma\ge -2\dim X
$$ 
The rest of the argument is similar to the case when we assumed $N$ to be a $\Q$-divisor. \\
\end{proof}

\begin{thm}\label{t-l-pairs-termination}
Let $(X,B+P)$ be a $\Q$-factorial dlt polarized pair.
Assume that either $P$ or $N=K_X+B-P$ is a $\Q$-divisor, or $N \geq 0$ with $\Supp N \subseteq \Supp B$.
 Moreover, assume that every LMMP on $K_X+B$ terminates.
Then, for any $\alpha\ge 0$ we can run an LMMP on $K_X+B+\alpha P$ which terminates.
\end{thm}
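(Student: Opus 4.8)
The plan is to reduce termination of an LMMP on $K_X+B+\alpha P$, for an arbitrary fixed $\alpha_0:=\alpha\ge 0$, to the assumed termination of LMMP's on $K_X+B$, by using Theorem \ref{t-p-trivial-LMMP} to get started in the large-coefficient regime and then an LMMP with scaling to descend to $\alpha_0$. By Theorem \ref{t-p-trivial-LMMP} there is a threshold $\alpha_*\gg 0$, which the proof produces uniformly along the LMMP (the relevant data --- an integer $m$ with $mP_i$ Cartier, and the coefficients $r_i$ --- are preserved by the birational transforms), such that every LMMP on $K_X+B+\alpha P$ with $\alpha\ge\alpha_*$ is $P$-trivial. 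If $\alpha_0\ge\alpha_*$ we are done at once: a $P$-trivial LMMP contracts only rays $R$ with $P\cdot R=0$, so $(K_X+B+\alpha_0P)\cdot R=(K_X+B)\cdot R$, and such an LMMP is simultaneously an LMMP on $K_X+B$, hence terminates by hypothesis. So assume $\alpha_0<\alpha_*$ and set $\alpha_1:=\alpha_*$.

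\emph{First phase.} I would first run a ($P$-trivial) LMMP on $K_X+B+\alpha_1P$. As above its steps are $(K_X+B)$-negative, so it is an LMMP on $K_X+B$ and terminates, and being $P$-trivial it preserves the nefness of $P$. It ends either at a Mori fibre space, whose defining ray is $P$-trivial and therefore also $(K_X+B+\alpha_0P)$-negative, so that it is at the same time a Mori fibre space for $K_X+B+\alpha_0P$ and we are finished; or at a model $Y$ with $K_Y+B_Y+\alpha_1P_Y$ nef, in which case I continue.

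\emph{Second phase.} From $Y$ I would run the LMMP on $K_Y+B_Y+\alpha_0P_Y$ with scaling of $(\alpha_1-\alpha_0)P_Y$, which is legitimate because $K_Y+B_Y+\alpha_1P_Y$ is nef. At a typical step, on a model $X'$ with transforms $B',P'$, let $\alpha'\in(\alpha_0,\alpha_1]$ be the nef threshold, so that $K_{X'}+B'+\alpha'P'$ is nef while some extremal ray $R$ satisfies $(K_{X'}+B'+\alpha'P')\cdot R=0$ and $(K_{X'}+B'+\alpha_0P')\cdot R<0$. These two relations force $P'\cdot R>0$, whence
\[
(K_{X'}+B')\cdot R=(K_{X'}+B'+\alpha_0P')\cdot R-\alpha_0\,(P'\cdot R)<0,
\]
so every scaling ray is $(K_{X'}+B')$-negative, with no appeal to nefness of $P'$. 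Consequently each step is a step of an LMMP on $K_{X'}+B'$: the flipping contractions are $(K_{X'}+B')$-flipping contractions, whose flips exist for $\Q$-factorial dlt pairs, and since the flip of an extremal contraction depends only on the contraction and not on the chosen anti-ample divisor, the $(K+B+\alpha_0P)$-flip coincides with this $(K+B)$-flip. Existence of the scaling rays is standard, via the cone theorem applied in the $(K+B)$-negative part of $\NEbar(X')$, where the threshold is attained.

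Concatenating the two phases, the entire process is an LMMP on $K_X+B$, hence by hypothesis a finite sequence of flips and divisorial contractions; therefore the LMMP on $K_X+B+\alpha_0P$ terminates, stopping either when $\alpha'=\alpha_0$, i.e. when $K+B+\alpha_0P$ is nef, or at a Mori fibre space. The main obstacle --- and what makes the statement more than a formality once Theorem \ref{t-p-trivial-LMMP} is known --- is the second phase: for small $\alpha_0$ the scaling is forced to contract $P$-positive rays, along which nefness of $P$ is destroyed, so one cannot reason inside the category of polarized pairs. The elementary numerical identity above is exactly what rescues the argument, recasting every such step as a genuine $K_X+B$-step so that termination is inherited from the hypothesis on $K_X+B$.
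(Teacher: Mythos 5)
Your overall architecture coincides with the paper's: first run a $P$-trivial LMMP on $K_X+B+\beta P$ for some $\beta\gg\alpha$ via Theorem \ref{t-p-trivial-LMMP}, then descend to the given $\alpha$ by an LMMP on $K_X+B$ with scaling of $\beta P$, noting that every scaling ray is automatically $(K_X+B)$-negative so termination is inherited from the hypothesis. The genuine gap is in your second phase, at the sentence ``Existence of the scaling rays is standard, via the cone theorem applied in the $(K+B)$-negative part of $\NEbar(X')$, where the threshold is attained.'' This is precisely what is \emph{not} standard, and it is the heart of the paper's proof. The scaling divisor here is (a multiple of) $P$, which is nef but not ample, and $B+tP$ is not a boundary, so $(X',B'+tP')$ is not an lc pair; hence neither the finiteness part of the cone theorem (which applies only to divisors of the form $K_{X'}+B'+(\text{ample})$) nor the standard theory of LMMP with scaling (which requires a scaling divisor $C\ge 0$ with $(X',B'+C)$ lc) applies. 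The cone theorem only gives local discreteness of $(K_{X'}+B')$-negative extremal rays away from the hyperplane $(K_{X'}+B')^{\perp}$, so the rays computing nef thresholds $t_i<\alpha'$ could a priori accumulate towards that hyperplane with $t_i$ strictly increasing to $\alpha'$ and no ray attaining the threshold; nothing you cite rules this out, and if it happens your scaling process has no ray to contract and the argument stops.

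This attainment statement is exactly the Claim inside the paper's proof, and its proof is where the theorem's arithmetic hypotheses (either $P$ or $N=K_X+B-P$ is a $\Q$-divisor, or $N\ge 0$ with $\Supp N\subseteq\Supp B$) are used a second time --- note that your phase two never invokes them, which is the tell-tale sign of the gap. The paper argues: if no threshold-computing ray exists, pick extremal curves $\Gamma_i$ generating rays $R_i$ with $(K_X+B)\cdot R_i<0$, $(K_X+B+t_iP)\cdot R_i=0$ and $t_i$ strictly increasing to the threshold; by boundedness of the length of extremal rays the set $\{(K_X+B)\cdot\Gamma_i\}$ is finite, so $P\cdot\Gamma_i=-(K_X+B)\cdot\Gamma_i/t_i$ is bounded above and below; then rationality of $P$ (or of $N$, via $N\cdot\Gamma_i=(K_X+B)\cdot\Gamma_i-P\cdot\Gamma_i$, or a small rational perturbation of $N$ in the third case) forces these intersection numbers into a bounded discrete set, hence only finitely many values of $t_i$ occur --- contradicting strict increase. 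You need to supply an argument of this kind (or quote this Claim) for your second phase to be valid; as written, the step ``the threshold is attained'' is unjustified and is precisely the non-formal content of the theorem.
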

\begin{proof}
Pick a  sufficiently large number $\beta>\alpha$. By Theorem \ref{t-p-trivial-LMMP}, we can run an
LMMP on $K_X+B+\beta P$ which is $P$-trivial. The LMMP is an LMMP on both
$K_X+B+\alpha P$ and $K_X+B$ so it terminates by assumptions.
Thus, we reach a model $Y$ on which $K_Y+B_Y+\beta P_Y$ is nef or there is a
$K_Y+B_Y+\beta P_Y$-negative Mori fibre space structure. In the latter case we are done since the
fibre space structure is also $K_Y+B_Y+\alpha P_Y$-negative.
So, assume the former case and by replacing $X$ with $Y$
we may assume that $K_X+B+\beta P$ is nef.

We will run an LMMP on $K_X+B$ with scaling of $\beta P$. More precisely, we replace
$\beta$ with the number
$$
\min\{t\ge \alpha \mid K_X+B+tP ~~\mbox{is nef}\}
$$
If $\beta=\alpha$, we are done. So, we can assume that  $\beta>\alpha$.

\begin{claim}
There is an extremal ray $R$ such that
$(K_X+B)\cdot R<0$ but $(K_X+B+\beta P)\cdot R=0$.
\end{claim}
\emph{Proof of the Claim.} 
If the claim is not true, then
there exist a strictly increasing sequence $t_i$ of positive numbers approaching $\beta$, and
extremal rays $R_i$ such that $(K_X+B)\cdot R_i<0$ but $(K_X+B+t_i P)\cdot R_i=0$.
For each $i$, pick an extremal curve $\Gamma_i$ generating $R_i$ (cf. [\ref{B-II}, Section 3]).
Then, the set of the intersection numbers
$\{(K_X+B)\cdot \Gamma_i\}_i$ is finite (cf. [\ref{B-II}, Remark 3.1]). So, the intersection
number
$$
P\cdot \Gamma_i=\frac{-(K_X+B)\cdot \Gamma_i}{t_i}
$$
is bounded from above and below. 

If $P$ is a $\Q$-divisor, then 
there are only finitely many possibilities for the numbers $P\cdot \Gamma_i$.
Therefore,  in this case there are only finitely many possibilities for the numbers $t_i$
and we should have $t_i=\beta$ for $i\gg 0$.
On the other hand, if $N$ is a $\Q$-divisor, then
from
$$
P\cdot \Gamma_i=(K_X+B)\cdot \Gamma_i-N\cdot \Gamma_i
$$
and the boundedness of $P\cdot \Gamma_i$ we deduce that $N\cdot \Gamma_i$ is bounded hence 
there are only finitely many
possibilities for the numbers $N\cdot \Gamma_i$. This in turn implies that
there are only finitely many
possibilities for the numbers $P\cdot \Gamma_i$ hence $t_i=\beta$ for $i\gg 0$.
Now assume that $N \geq 0$ with $\Supp N \subseteq \Supp B$. We will proceed similar to the 
proof of [\ref{B-WZD}, Lemma 3.1]. First we replace $N$ with $(1-\epsilon)N$ and replace 
$B$ with $B-\epsilon N$ for a sufficiently small $\epsilon$. Then, there is a $\Q$-divisor 
$N'$ with the same support as $N$ and such that $||N-N'||$ is sufficiently small, $(X,B')$ is lc where $B'=P+N'-K_X$, and 
$(K_X+B')\cdot \Gamma_i<0$ for every $i$ perhaps after replacing the sequence $t_i$ with an 
infinite subsequence. As above 
 the set of the intersection numbers
$\{(K_X+B')\cdot \Gamma_i\}_i$ is finite. Moreover, from 
$$
P\cdot \Gamma_i=(K_X+B')\cdot \Gamma_i-N'\cdot \Gamma_i
$$
and the boundedness of $P\cdot \Gamma_i$ we deduce that $N'\cdot \Gamma_i$ is also bounded 
hence there are only finitely many possibilities for $N'\cdot \Gamma_i$ 
which in turn implies that there are only finitely many possibilities for $P\cdot \Gamma_i$ 
hence from $(K_X+B+t_i P)\cdot \Gamma_i=0$ we get $t_i=\beta$ for $i\gg 0$. 
This proves the claim. 

We continue the proof of the theorem. If $R$ in the claim defines a Mori fibre space structure, 
we stop. Otherwise, let $X\bir X'$ be the divisorial contraction or the flip associated
to $R$. Continuing
the process on $X'$ and so on produces an LMMP on $K_X+B$ with scaling of $\beta P$.
Of course we may lose the nefness of $P$ but we do not need it.
The above LMMP is obviously an LMMP on $K_X+B$. So, by assumptions, it terminates.
Therefore, along the way we get a model $Y$ on which $K_Y+B_Y+\alpha P_Y$ is nef or there is a
$K_Y+B_Y+\alpha P_Y$-negative Mori fibre space structure and we
are done.\\
\end{proof}

\begin{proof}(of Theorem \ref{t-main-3})
This follows from Theorems \ref{t-p-trivial-LMMP} and \ref{t-l-pairs-termination}.
\end{proof}

\begin{proof}(of Corollary \ref{c-dim-3})
Since the LMMP holds in dimension $\le 3$, the claim follows from Theorem \ref{t-main-3}.
\end{proof}

\textbf{Abundance}. Although the LMMP is expected to hold for polarized pairs by the
above results but abundance does not hold. Indeed if $X$ is an
elliptic curve, $B=0$ and $P$ is a non-torsion numerically trivial divisor, then
$K_X+B+P$ is nef but not semi-ample. A more subtle counter-example is the following.
There is a smooth projective surface $X$ which is a ruled surface over some elliptic curve
with a section $S$ such that $K_X+2S\sim 0$ and $S$ is nef but not semi-ample
(cf. [\ref{Sh-complements}, Example 1.1]).
Now put $B=S$ and $P=2S$. Then, $K_X+B+P\sim S$ which is nef but not semi-ample.\\

\textbf{Finite generation}. Similar to abundance, finite generation also fails in general 
although it holds in certain interesting cases, eg Theorem \ref{t-main-4}.
Let $C$ be an elliptic curve and $Q$ a non-torsion numerically trivial divisor. Let
$X=\PP(\mathcal{O}_C\oplus \mathcal{O}_C(1))$ and $g\colon X\to C$ the corresponding morphism.
There is a birational morphism $f\colon X\to Z$ which contracts a curve $S$: the morphism is
induced by the globally generated invertible sheaf $\mathcal{O}_X(1)$, and the morphism $S\to C$ is
an isomorphism. There is also a curve $T$ disjoint from $S$ given by a general section of $\mathcal{O}_X(1)$
such that again the morphism $T\to C$ is an isomorphism.
It is easy to see that $K_X+S+T\sim 0$. Let $B=S+T$ and $P=g^*Q+A$ where $A$ is the divisor corresponding
to $\mathcal{O}_X(1)$.
Then, $K_X+B+P$ is nef and big but not semi-ample hence its algebra is not finitely generated.\\

\textbf{Non-vanishing.} As mentioned earlier if we take  $X$ to be an
elliptic curve, $B=0$ and $P$ a non-torsion numerically trivial divisor, then
$K_X+B+P$ is nef but not semi-ample. Moreover, the Kodaria dimension of $K_X+B+P$ is $-\infty$.
However, up to numerical equivalence $K_X+B+P$ has non-negative Kodaira dimension.
One can then ask:

\begin{quest}
Let $(X,B+P)$ be a lc polarized pair with $K_X+B+P$ pseudo-effective.
Is there $M\ge 0$ such that $K_X+B+P\equiv M$?
\end{quest}

Most probably the answer is no. However, there are interesting cases in which the answer to the
question is yes: for example assume that $(X,B+P)$ is a klt polarized pair with $B,P$ being $\Q$-Cartier, 
and let $X\to Z$ be a morphism where $Z$ is an abelian variety
and $K_X+B+P$ is big$/Z$; then $K_X+B+P\equiv M$ for some $M\ge 0$.
This can be proved as in [\ref{CCP}, Theorem 3.1] using Fourier-Mukai transforms (a complete proof
is given in [\ref{BCT}]).\\

\textbf{Limiting pairs.} A  \emph{limiting pair} is of the form $(X,\Delta)$ where $X$
is projective and $K_X+\Delta$ is $\R$-Cartier, and
 there exists a sequence of boundaries $\Delta_i$ such that $(X,\Delta_i)$ are pairs in the usual sense
 and $K_X+\Delta=\lim (K_X+\Delta_i)$ in $N^1(X)$. We say $(X,\Delta)$ is lc if we can choose
 $(X,\Delta_i)$ to be lc.

Every lc polarized pair $(X,\Delta=B+P)$ is a lc limiting pair:
we can pick ample $\Q$-divisors $A_i$ with $\lim A_i=0$ and choose appropriate $\Delta_i\sim_\R B+P+A_i$
so that  $(X,\Delta_i)$ are lc pairs in the usual sense
 and $K_X+\Delta=\lim (K_X+\Delta_i)$.

If $(X,\Delta)$ is a limiting pair and if
$X\bir Y$ is a partial LMMP on $K_X+\Delta$, then $(Y,\Delta_Y)$ is also a limiting pair.
The point is that $X\bir Y$ is also a partial LMMP on $K_X+\Delta_i$ for every $i\gg 0$
so the pairs $(Y,\Delta_{i,Y})$ are all lc in the usual sense when $i\gg 0$.

The cone and contraction theorems and the existence of log flips hold for every lc limiting pair.
One then wonders if a reasonable birational theory can be developed for such pairs.

\vspace{0.3cm}
\section{Nakayama-Zariski decomposition with nef positive part}\label{s-NZD}

Nakayama [\ref{Nakayama}] defined a decomposition $D=P_\sigma(D)+N_\sigma(D)$ for any pseudo-effective
$\R$-divisor $D$ on a smooth projective variety. We refer to this as the Nakayama-Zariski decomposition.
We call $P_\sigma$ the positive part and $N_\sigma$ the negative part. We can
extend it to the singular case as follows.
Let $X$ be a normal projective variety and $D$ a pseudo-effective $\R$-Cartier divisor on $X$. We define $P_\sigma(D)$
by taking a resolution $f\colon W\to X$ and letting $P_\sigma(D):=f_*P_\sigma(f^*D)$.
The following lemma shows in particular that this is independent of the choice of the resolution.

\begin{lem}\label{l-NZD}
With $X,D, W$ as above we have:

(1) $P_\sigma(D)$ is independent of the choice of the resolution $f$;

(2) For any $\R$-divisor $E\ge 0$ on $W$ which is exceptional$/X$, we have
$P_\sigma(f^*D+E)=P_\sigma(f^*D)$;

(3) If $0\le M$ is $\R$-Cartier with $\Supp M\subseteq \Supp N_\sigma(D)$, then $P_\sigma(D+M)=P_\sigma(D)$;

(4)  If $0\le M \le N_\sigma(D)$ is $\R$-Cartier, then $P_\sigma(D-M)=P_\sigma(D)$;

(5) If $\phi\colon X\bir Y$ is a partial $D$-MMP, then
$\phi_*P_\sigma(D)=P_\sigma(\phi_*D)$;

(6) If $A$  is $\R$-Cartier and $D+\epsilon A$ is a movable $\R$-divisor for each sufficiently
small $\epsilon>0$, then $P_\sigma(D)=D$;
\end{lem}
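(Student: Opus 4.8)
The plan is to reduce every one of the six statements to Nakayama's theory of the numbers $\sigma_\Gamma$ on the smooth model $W$, combined with the negativity lemma. Recall that on $W$ we have $N_\sigma(f^*D)=\sum_\Gamma \sigma_\Gamma(f^*D)\,\Gamma$, and that by [\ref{Nakayama}] the function $\sigma_\Gamma$ is subadditive and monotone, the positive part of any pseudo-effective divisor is movable (so $\sigma_\Gamma$ vanishes on it), and for a birational morphism $g\colon W'\to W$ of smooth projective varieties one has $g_*P_\sigma(g^*(\,\cdot\,))=P_\sigma(\,\cdot\,)$. Part (1) is then the cleanest: given two resolutions I would dominate both by a common third resolution and reduce to comparing $f$ with $f\circ g$ for a birational $g\colon W'\to W$. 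Since $(f\circ g)^*D=g^*f^*D$, the desired equality $f_*P_\sigma(f^*D)=(f\circ g)_*P_\sigma((f\circ g)^*D)$ is exactly Nakayama's birational-pushforward property $g_*P_\sigma(g^*f^*D)=P_\sigma(f^*D)$ pushed forward by $f_*$, and needs no new idea.

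Part (2) is the technical heart, and I expect it to be the main obstacle; the rest of the lemma is built on it. I would prove the sharper statement $N_\sigma(f^*D+E)=N_\sigma(f^*D)+E$ on $W$. The inequality $\le$ is immediate from subadditivity of $\sigma_\Gamma$ together with $\sigma_\Gamma(E)\le \mult_\Gamma E$. The reverse inequality is the point: for a component $\Gamma$ of $E$ I must show that all of $E$ is forced into the negative part, i.e. $\sigma_\Gamma(f^*D+E)\ge \sigma_\Gamma(f^*D)+\mult_\Gamma E$. Here the hypothesis that $E$ is effective and exceptional$/X$ is essential, since any effective divisor numerically equivalent to $f^*D+E+\epsilon A$ must contain $E$ up to $\epsilon$: an effective exceptional divisor cannot be moved off, which is precisely the content of the negativity lemma [\ref{Kollar-Mori}], and this is the only place it enters. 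For a non-exceptional $\Gamma$ both sides agree because $\mult_\Gamma E=0$. Once this full-absorption estimate is in hand, the conclusion $P_\sigma(f^*D+E)=P_\sigma(f^*D)$ is formal.

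For (3) and (4) I would first establish the smooth-model identities $N_\sigma(f^*D+M^\sim)=N_\sigma(f^*D)+M^\sim$ when $\Supp M^\sim\subseteq\Supp N_\sigma(f^*D)$, and $N_\sigma(f^*D-M)=N_\sigma(f^*D)-M$ when $0\le M\le N_\sigma(f^*D)$; both are two-sided subadditivity estimates using that $P_\sigma(f^*D)$ is movable, and (4) in fact reduces to (3). To descend to $X$ I write $f^*M=M^\sim+G$ with $G\ge 0$ exceptional$/X$, invoke the mechanism of (2) that adding $G$ does not alter $\sigma_\Gamma$ along non-exceptional $\Gamma$, and then push forward to obtain $f_*N_\sigma(f^*(D+M))=N_\sigma(D)+M$, whence $P_\sigma(D+M)=P_\sigma(D)$; part (4) is analogous. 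Part (5) reduces to a single step $\phi$: taking a common resolution $p\colon W\to X$ and $q=\phi\circ p\colon W\to Y$, a $D$-negative step gives $p^*D=q^*D_Y+E$ with $E\ge 0$ exceptional$/Y$ (increase of discrepancies for a flip; a positive multiple of the pullback of the contracted divisor for a divisorial contraction). Applying (2) to $q$ yields $P_\sigma(p^*D)=P_\sigma(q^*D_Y)$, and since $\phi_*p_*=q_*$ on divisors, pushing forward gives $\phi_*P_\sigma(D)=q_*P_\sigma(p^*D)=q_*P_\sigma(q^*D_Y)=P_\sigma(\phi_*D)$.

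Finally, for (6), movability of $D+\epsilon A$ means its negative part vanishes along every non-exceptional divisor, so on $W$ we get $\sigma_\Gamma(f^*(D+\epsilon A))=0$ for such $\Gamma$; perturbing additionally by a small ample divisor and using subadditivity lets me pass to the limit $\epsilon\to 0$ and conclude $\sigma_\Gamma(f^*D)=0$ for every non-exceptional $\Gamma$. Hence $N_\sigma(f^*D)$ is exceptional$/X$, and $P_\sigma(D)=f_*P_\sigma(f^*D)=f_*f^*D-f_*N_\sigma(f^*D)=D$. Thus all the weight of the lemma sits in the full-absorption estimate of (2), which propagates through (3)--(5) and which I regard as the one genuinely nonformal step.
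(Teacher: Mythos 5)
Your architecture is the same as the paper's: (1) is quoted from [\ref{Nakayama}, Theorem 3.5.3]; everything else is funneled through the absorption statement in (2); (3) and (4) descend to $X$ by writing $f^*M=M^\sim+G$ with $G\ge 0$ exceptional and invoking (2) together with [\ref{Nakayama}, Lemma 2.1.5] (the paper proves (3) by a scaling trick, choosing $a$ with $M\le aN_\sigma(D)$ and comparing $P_\sigma((1+a)D)$ with $P_\sigma(D+M)$, but this is interchangeable with your subadditivity computation); your (5) and (6) are exactly the paper's arguments. The problem is at the step you yourself single out as the one nonformal point. You justify $\sigma_\Gamma(f^*D+E)\ge \sigma_\Gamma(f^*D)+\mult_\Gamma E$ by asserting that any effective $L\equiv f^*D+E+\epsilon A$ must contain $E$ up to $\epsilon$, and that this is ``precisely the content of the negativity lemma [\ref{Kollar-Mori}]''. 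It is not. The classical negativity lemma would require $-(L-E)$ to be nef over $X$, but $L-E\equiv f^*D+\epsilon A\equiv \epsilon A/X$ is relatively \emph{ample}, i.e.\ strictly positive on every $f$-contracted curve, so the lemma says nothing; and since $D$ is merely pseudo-effective, the ample perturbation in the definition of $\sigma_\Gamma$ cannot be removed. Indeed, for a \emph{fixed} $\epsilon>0$ your containment claim is false: blow up a point, take $D=0$ and $E$ the exceptional divisor; once $E+\epsilon A$ is ample one has $\sigma_E(E+\epsilon A)=0$, with no trace of $E$ left. The inequality only holds in the limit $\epsilon\to 0$, and that limit statement is exactly what has to be proved -- your argument restates the claim and attributes it to a lemma that does not deliver it.

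The paper's proof shows how to close this. One works directly with the limit objects: setting $G=N_\sigma(f^*D+E)-E=f^*D-P_\sigma(f^*D+E)$, the pushforward $f_*G$ is effective, and $-G=P_\sigma(f^*D+E)-f^*D$ is nef on the \emph{very general} curves$/X$ of each component of $G$, because $P_\sigma$ is movable (it is not known to be nef, which is a second, independent reason the classical negativity lemma is the wrong tool here) and $f^*D$ is numerically trivial on $f$-contracted curves. Shokurov's generalized negativity lemma [\ref{B-lc-flips}, Lemma 3.3] then gives $G\ge 0$, i.e.\ $N_\sigma(f^*D+E)\ge E$, and [\ref{Nakayama}, Lemma 2.1.5] strips $E$ off to yield $P_\sigma(f^*D+E)=P_\sigma(f^*D)$. (Your stronger equality $N_\sigma(f^*D+E)=N_\sigma(f^*D)+E$ then follows formally, so the rest of your plan for (3)--(5) goes through unchanged.) So: replace the appeal to [\ref{Kollar-Mori}] by the very-general-curves version of negativity applied at the level of $N_\sigma$, and your proposal becomes the paper's proof.
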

\begin{proof}
(1)
This follows from Nakayama [\ref{Nakayama}, Theorem 3.5.3].

(2) This is similar to Nakayama [\ref{Nakayama}, Lemma 3.5.1]. We have
$$
P_\sigma(f^*D+E)+N_\sigma(f^*D+E)=f^*D+E\equiv E/X
$$
hence $N_\sigma(f^*D+E)-E$ is anti-nef on the very general curves$/X$ of each component of $E$, and its
pushdown on $X$ is effective. Then, by Shokurov's generalized negativity lemma (cf. [\ref{B-lc-flips}, Lemma 3.3]),
we have $N_\sigma(f^*D+E)-E\ge 0$. Therefore,
$$
P_\sigma(f^*D+E)=P_\sigma(f^*D)
$$
by Nakayama [\ref{Nakayama}, Lemma 2.1.5].

(3)
Let $P=P_\sigma(D)$ and $N=N_\sigma(D)$.
Choose $a>0$ so that $M\le aN$. Then, 
$$
N_\sigma(D+M)=N_\sigma(P+N+M)\le N_\sigma(N+M)\le N+M
$$ 
Thus,  $P_\sigma(D+M)=P_\sigma(P+N+M)=P+C$ for some $0\le C\le N+M$.
 Then,
$$
(1+a)P=P_\sigma((1+a)D)=P_\sigma(D+aP+aN)
$$
$$
=P_\sigma(D+M+aP+aN-M)\ge P+C+aP
$$
which is possible only if $C=0$.

(4)
If $M^\sim$ is the birational transform of $M$ on $W$, then $M^\sim \le N_\sigma(f^*D)$.
By Nakayama [\ref{Nakayama}, Lemma 2.1.5],
$P_\sigma(f^*D-M^\sim)= P_\sigma(f^*D)$.
Now if we let $f^*M=M^\sim+E$, then
$$
P_\sigma(f^*D-M^\sim)=P_\sigma(f^*D-f^*M+E)=P_\sigma(f^*D-f^*M)
$$
by (2) hence $P_\sigma(D)=P_\sigma(D-M)$.

(5)
We may assume that $f\colon W\to X$ and $g\colon W\to Y$ give a common resolution. Then, $f^*D=g^*D_Y+E$
where $D_Y:=\phi_*D$ and $E\ge 0$ is exceptional$/Y$. By (2) we have
$$
P_\sigma(f^*D)=P_\sigma(g^*D_Y+E)=P_\sigma(g^*D_Y)
$$
hence
$$
\phi_*P_\sigma(D)=\phi_*f_*P_\sigma(f^*D)=\phi_*f_*P_\sigma(g^*D_Y)=g_*P_\sigma(g^*D_Y)=P_\sigma(D_Y)
$$

(6)
Take a  strictly decreasing sequence of numbers $\epsilon_i>0$ with $\lim \epsilon_i=0$.
Pick an ample divisor $G$ on $W$ so that $G+f^*A$ is ample.
Since $D+\epsilon_i A$ is a movable $\R$-divisor, $N_\sigma(f^*D+\epsilon_i (f^*A+G))$ is exceptional$/X$.
By going to the limit, we deduce that $N_\sigma(f^*D)$ is also exceptional$/X$ hence
$P_\sigma(D)=f_*P_\sigma(f^*D)=D$.\\
\end{proof}

 We will say that a pseudo-effective $\R$-Cartier divisor $D$ on a normal projective variety $X$
 \emph{birationally} has a  Nakayama-Zariski decomposition with nef positive part
if there is a resolution $f\colon W\to X$ so that $P_\sigma(f^*D)$ is nef. If $g\colon V\to W$ is any 
birational morphism from a smooth projective $V$, then $P_\sigma(g^*f^*D)=g^*P_\sigma(f^*D)$: 
indeed, since $P_\sigma(f^*D)$ is nef, $P_\sigma(g^*f^*D)\ge g^*P_\sigma(f^*D)$; on the other hand, 
in general $P_\sigma(g^*f^*D)\le g^*P_\sigma(f^*D)$ which follows from the 
generalized negativity lemma (cf. [\ref{B-lc-flips}, Lemma 3.3]) because 
$E:=g^*P_\sigma(f^*D)-P_\sigma(g^*f^*D)$
is exceptional$/W$ and it is anti-nef on the very general curves$/W$ of each component of $E$.

\begin{proof}(of Theorem \ref{t-main-1})
By assumptions,
there is a log resolution $f\colon W\to X$ on which we have $f^*(K_X+B)=P+N$ where
$P:=P_\sigma(f^*(K_X+B))$ is nef. We can write
$$
K_W+B_W=f^*(K_X+B)+E
$$
where $B_W,E\ge 0$, and $E$ is exceptional$/X$ whose support contains each prime exceptional$/X$ divisor
$D$ on $W$ if $a(D, X, B)>0$. The pair $(W,B_W)$ is dlt and a log minimal model of $(W,B_W)$ 
is also a log minimal model of $(X,B)$ by [\ref{B-lc-flips}, Remark 2.8]. By Lemma \ref{l-NZD} (2),
$$
P_\sigma(K_W+B_W)=P_\sigma(f^*(K_X+B)+E)=P_\sigma(f^*(K_X+B))=P
$$
So, we can replace $(X,B)$ with $(W,B_W)$ and we can assume that $K_X+B=P+N$
where $P=P_\sigma(K_X+B)$ is nef. Moreover, by taking a higher resolution if necessary we can also assume
 that $(X,B+N)$ is log smooth.

First assume that $N$ is a $\Q$-divisor. Let $\alpha$ be a sufficiently large number.
Now run an LMMP on $K_X+B+\alpha P$ with scaling of some ample divisor $A$.
By Theorem \ref{t-p-trivial-LMMP}, $P$ is numerically
trivial on each extremal ray in the process hence the nef
property of $P$ is preserved and the LMMP is also an LMMP on $K_X+B$.
After finitely many steps, the LMMP consists of only log flips hence we
reach a model $Y$ on which $K_Y+B_Y+\alpha P_Y+\epsilon A_Y$ is a movable $\R$-divisor
for each $\epsilon>0$.

Since
$$
N_\sigma(K_X+B+\alpha P+\alpha N)=N_\sigma((1+\alpha)(P+N))=(1+\alpha)N
$$
from Lemma \ref{l-NZD} (4) we get
$$
P_\sigma(K_X+B+\alpha P)=P_\sigma(P+N+\alpha P)=(1+\alpha)P
$$
By Lemma \ref{l-NZD} (5),
$$
P_\sigma(K_Y+B_Y+\alpha P_Y)=\phi_*P_\sigma(K_X+B+\alpha P)=(1+\alpha)P_Y
$$
where $\phi$ is the birational map $X\bir Y$. On the other hand, by Lemma \ref{l-NZD} (6),
$$
P_\sigma(K_Y+B_Y+\alpha P_Y)=K_Y+B_Y+\alpha P_Y=N_Y+(1+\alpha)P_Y
$$
Therefore, $N_Y=0$ and $K_Y+B_Y=P_Y$ which is nef. This means that the LMMP terminates and
we get a log minimal model of $(X,B)$.

Now we treat the general case when $N$ is not necessarily a $\Q$-divisor. Since 
$(X,B+N)$ is log smooth we can find an $\R$-divisor $G$
with sufficiently small coefficients such that $\Supp G\subseteq \Supp N$,
$(X,B'=B+G)$ is dlt, $N'=N+G\ge 0$ is a $\Q$-divisor, and that $\Supp N'=\Supp N$. Note that
$G$ may not be effective but this does not cause any problem.

We get the
decomposition $K_X+B'=P+N'$ where by Lemma \ref{l-NZD} (3)(4) we have $P=P_\sigma(K_X+B')$ which
is nef. By the arguments above we can run a $P$-trivial
LMMP on $K_X+B'$ which ends up with a log minimal model $(Y,B_Y')$. The LMMP contracts $N'$
hence it contracts $N$ as well. Let $f\colon W\to X$ and $g\colon W\to Y$ be a common resolution.
Put $R=f^*(K_X+B)-g^*(K_Y+B_Y)$. Since the LMMP is $P$-trivial, $f_*g^*(P_Y)=P$ from which we get
$$
f_*R=K_X+B-f_*g^*(K_Y+B_Y)=K_X+B-f_*g^*(P_Y)=K_X+B-P=N\ge 0
$$
Thus, $R\ge 0$ by the negativity lemma. Moreover, any prime divisor $D$ on $X$ contracted by $X\bir Y$
is a component of $f_*R=N$ hence 
$$
a(D,X,B)<a(D,Y,B_Y)
$$
Therefore, $(Y,B_Y)$ is a log minimal model of $(X,B)$.
\end{proof}

\vspace{0.3cm}
\section{Fujita and CKM Zariski decompositions}\label{s-FCKMZD}

Let $D$ be an $\R$-Cartier divisor on a normal projective variety $X$. A \emph{Fujita-Zariski decomposition}
for $D$ is an expression $D=P+N$ such that

$\bullet$ $P$ is nef, $N\ge 0$, and

$\bullet$ if $f\colon W\to X$ is a birational morphism from a normal projective variety and $f^*D=P'+N'$
with $P'$ nef and $N'\ge 0$, then $P'\le f^*P$.

On the other hand, a \emph{Cutkosky-Kawamata-Moriwaki-Zariski}
(CKM-Zariski for short) decomposition for $D$ is an expression $D=P+N$ such that

$\bullet$ $P$ is nef, $N\ge 0$, and

$\bullet$ the maps $H^0(X,\rddown{mP})\to H^0(X,\rddown{mD})$ are isomorphisms for all
$m\in \N$.

In both decompositions, $P$ is called the positive part.\\

\begin{lem}\label{l-FZD}
Let $D$ be an $\R$-Cartier divisor on a normal projective variety $X$ with a Fujita-Zariski decomposition
$D=P+N$. Let $G$ be an $\R$-Cartier divisor such that $\Supp G\subseteq \Supp N$ and
 $N+G\ge 0$. Then, $P+N+G$ is a Fujita-Zariski decomposition of $D+G$ with $P$ the positive part.
\end{lem}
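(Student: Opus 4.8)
The plan is to verify the two bullet points in the definition of a Fujita-Zariski decomposition for the expression $D+G=P+(N+G)$. The first bullet is immediate: $P$ is nef by hypothesis (it is the positive part of $D$), and $N+G\ge 0$ is exactly the assumption on $G$, so $P+(N+G)$ is a genuine decomposition of $D+G$ into a nef divisor plus an effective one. All the content is in the second bullet, the maximality of $P$: given any birational morphism $f\colon W\to X$ from a normal projective variety and any decomposition $f^*(D+G)=P''+N''$ with $P''$ nef and $N''\ge 0$, I must show $P''\le f^*P$.

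First I would record the observation that locates the difficulty. Pulling back, $f^*D=P''+(N''-f^*G)$, and if $N''-f^*G$ were effective we could apply the defining property of the Fujita-Zariski decomposition of $D$ (with the morphism $f$) directly to conclude $P''\le f^*P$. When $G\le 0$ this is automatic, since then $-f^*G\ge 0$ and $N''-f^*G\ge N''\ge 0$. The genuine obstacle is the case in which $G$ has a nonnegative part: adding an effective divisor supported on $\Supp N$ enlarges the negative part, but $N''-f^*G$ need not be effective, so the Fujita property of $D$ cannot be invoked naively.

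To get around this I would use a scaling trick, mirroring the proof of Lemma \ref{l-NZD}(3). Since $\Supp G\subseteq\Supp N$, I can choose a real number $a\ge 0$ with $aN-G\ge 0$ (comparing coefficients component by component on $\Supp N$). Then
\[
(1+a)D=(D+G)+aP+(aN-G),
\]
and pulling back gives
\[
f^*\big((1+a)D\big)=\big(P''+af^*P\big)+\big(N''+f^*(aN-G)\big).
\]
Here $P''+af^*P$ is nef, being a sum of nef divisors (recall $a\ge 0$ and $P$ is nef), while $N''+f^*(aN-G)\ge 0$ is effective. Scaling the Fujita-Zariski decomposition $D=P+N$ by the positive constant $1+a$ shows that $(1+a)D=(1+a)P+(1+a)N$ is again a Fujita-Zariski decomposition, with positive part $(1+a)P$. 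Applying its defining maximality property to the displayed decomposition (via the morphism $f$) yields $P''+af^*P\le (1+a)f^*P$, and subtracting $af^*P$ gives $P''\le f^*P$, as required.

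The only points needing a little care are the scale-invariance of the Fujita-Zariski decomposition under multiplication by $1+a>0$ (straightforward, since dividing any nef-plus-effective decomposition of $h^*((1+a)D)$ by $1+a$ produces one of $h^*D$) and the existence of a finite $a\ge 0$ with $aN-G\ge 0$, which is exactly where the support condition $\Supp G\subseteq\Supp N$ is used. With these in place the maximality of $P$ follows, completing the verification that $P+(N+G)$ is a Fujita-Zariski decomposition of $D+G$.
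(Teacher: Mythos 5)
Your proof is correct and follows essentially the same route as the paper: both use the scaling trick of choosing $a$ (via the support condition $\Supp G\subseteq \Supp N$) so that a multiple of $D$ decomposes as the given nef-plus-effective decomposition of $f^*(D+G)$ plus additional nef and effective pieces, then invoke the maximality property of the scaled Fujita-Zariski decomposition. The only differences are cosmetic — the paper works with $aD$ and $(a-1)N\ge G$ where you work with $(1+a)D$ and $aN\ge G$, and you spell out the scale-invariance that the paper leaves implicit.
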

\begin{proof}
Put $M=N+G$. Let $f\colon W\to X$ be a birational morphism from a normal projective variety and
assume $f^*(D+G)=P'+M'$
where $P'$ is nef and $M'\ge 0$. We need to show that $f^*P\ge P'$.
There is $a>0$ such that $aN=M+L$ with $L\ge 0$. Then, we have
\begin{equation*}
\begin{split}
 f^*(aD)=af^*P+af^*N & =af^*P+f^*M+f^*L\\
 & =(a-1)f^*P+f^*(D+G)+f^*L\\
 & =(a-1)f^*P+P'+M'+f^*L
\end{split}
\end{equation*}

Since $aD=aP+aN$ is a Fujita-Zariski decomposition with $aP$ the positive part, 
we deduce that $af^*P\ge (a-1)f^*P+P'$ hence $f^*P\ge P'$.
\end{proof}

Let $(X,B)$ be a projective klt pair. Kawamata [\ref{Kawamata-ZD}] showed that if $K_X+B$ is a big
$\Q$-divisor and if it has a CKM-Zariski decomposition, then the log canonical
ring of $(X,B)$ is finitely generated, i.e. $(X,B)$ has a log canonical model (see also Moriwaki [\ref{Moriwaki}] 
and Fujita [\ref{Fujita}]).
We prove a similar result for lc pairs where we replace log canonical model by log minimal model.

\begin{thm}
Let $(X,B)$ be a projective lc pair such that $K_X+B$ is big. Then, the following are
equivalent:

(1) $K_X+B$ birationally  has a Fujita-Zariski decomposition;

(2) $K_X+B$ birationally  has a CKM-Zariski decomposition;

(3) $(X,B)$ has a log minimal model;
\end{thm}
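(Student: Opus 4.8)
The plan is to prove the cyclic chain of implications $(3)\Rightarrow(1)\Rightarrow(2)\Rightarrow(3)$, exploiting the fact that Theorem \ref{t-main-1} already delivers a log minimal model from a Nakayama-Zariski decomposition with nef positive part. The cleanest route is to connect each of the Fujita and CKM decompositions back to that Nakayama-Zariski hypothesis, and to compare the various positive parts on a common resolution.

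For $(3)\Rightarrow(1)$, suppose $(X,B)$ has a log minimal model $(Y,B_Y)$. Taking a common resolution $f\colon W\to X$, $g\colon W\to Y$, I would set $P:=g^*(K_Y+B_Y)$, which is nef since $K_Y+B_Y$ is nef, and $N:=f^*(K_X+B)-P$. The negativity lemma (applied as in the proof of Theorem \ref{t-main-1}, using that the pushforward $f_*N = (K_X+B)-f_*g^*(K_Y+B_Y)$ is effective from the discrepancy inequality defining a weak lc model) gives $N\ge 0$. To verify the Fujita universality condition, given any further birational model with $f^*(K_X+B)=P'+N'$, $P'$ nef and $N'\ge 0$, I would compare $P'$ with $P$; the point is that the log minimal model is a relatively minimal nef model, so its pullback dominates every competing nef part. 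I expect this comparison to reduce, after passing to a higher common resolution, to the negativity lemma once more, since $P-P'$ will be exceptional and anti-nef on the relevant curves.

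For $(1)\Rightarrow(2)$, I would show that the positive part $P$ of a Fujita-Zariski decomposition $K_X+B=P+N$ also serves as the positive part of a CKM decomposition, i.e. that $H^0(X,\rddown{mP})\to H^0(X,\rddown{m(K_X+B)})$ is an isomorphism for all $m$. Injectivity is clear since $P\le K_X+B$. For surjectivity, I would take any section $s\in H^0(X,\rddown{m(K_X+B)})$, realize its divisor as $mN$ plus an effective movable part, and use the universality of the Fujita positive part to absorb the fixed part $mN$: a section of $\rddown{m(K_X+B)}$ gives an effective divisor linearly equivalent to $m(K_X+B)$, whose nef-and-effective splitting forces, by the defining maximality of $P$, that the section already comes from $\rddown{mP}$. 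Lemma \ref{l-FZD} may be useful here to adjust $N$ by divisors supported on $\Supp N$ without disturbing the positive part.

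For $(2)\Rightarrow(3)$, I would pass to a resolution where $K_X+B$ birationally has a CKM-Zariski decomposition and relate the CKM positive part to the Nakayama-Zariski positive part $P_\sigma$, then invoke Theorem \ref{t-main-1}. The key comparison is that for a big divisor the CKM positive part, being nef with the same sections, must equal the Nakayama-Zariski positive part after possibly blowing up: since $P_\sigma$ captures the asymptotic sectional behaviour and the CKM condition forces the section rings of $P$ and $K_X+B$ to coincide, one gets $P=P_\sigma(f^*(K_X+B))$ on a suitable model, which is therefore nef. Then Theorem \ref{t-main-1} produces the log minimal model. I expect this last implication to be the main obstacle: the identification of the CKM positive part with $P_\sigma$ requires care because CKM only controls the \emph{rounded-down} integral sections $\rddown{mP}$ whereas $P_\sigma$ is defined via asymptotic multiplier-type invariants, so I would need a lemma showing that, for big $D$, a nef $P$ with $N\ge0$ satisfying the CKM isomorphisms forces $N\ge N_\sigma(D)$, hence $P\le P_\sigma(D)$, combined with the reverse inequality $P_\sigma(D)\le P$ coming from nefness of $P$ and the universal property of $P_\sigma$.
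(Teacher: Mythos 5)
Your proposal is correct in outline, but it takes a genuinely different route from the paper's. The paper disposes of (3)$\Rightarrow$(2) and (2)$\Rightarrow$(1) by citation (to the proof of [\ref{B-WZD}, Theorem 1.5] and to Prokhorov [\ref{Prokhorov}, Section 7] respectively), and concentrates all its effort on (1)$\Rightarrow$(3): after passing to a log smooth model on which $K_X+B=P+N$ is itself a Fujita-Zariski decomposition, it runs a $P$-trivial LMMP on $K_X+B+\alpha P$ (Theorem \ref{t-p-trivial-LMMP}), writes $K_X+B\sim_\R A+L$ using bigness, and uses the fact that the Fujita decomposition survives the LMMP to force $N_Y=0$ on the final model; Theorem \ref{t-main-1} is never invoked. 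You instead close the cycle (3)$\Rightarrow$(1)$\Rightarrow$(2)$\Rightarrow$(3) with the weight on (2)$\Rightarrow$(3), identifying the CKM positive part with $P_\sigma$ and then quoting Theorem \ref{t-main-1}. That identification is legitimate, and it is in fact exactly how the paper proves its subsequent, more general theorem for abundant $K_X+B$: the CKM property forces $N\le F:=\lim \frac{1}{m}\Fix|\rddown{m(K_X+B)}|$, one has $F=N_\sigma$ for big (more generally abundant) divisors by [\ref{Lehmann}, Proposition 6.4], and nefness of $P$ gives $N_\sigma\le N$. So your route is arguably cleaner and generalizes beyond bigness, at the cost of importing Theorem \ref{t-main-1} and Lehmann's result; the paper's direct (1)$\Rightarrow$(3) stays inside the polarized-pair LMMP machinery. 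Your (3)$\Rightarrow$(1) and (1)$\Rightarrow$(2) (Fujita universality applied to the moving parts of $|\rddown{m(K_X+B)}|$) are standard and workable, and note that Fujita$\Rightarrow$CKM is the easy direction, whereas the paper's citation of Prokhorov covers the harder converse.

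Two corrections. First, in (2)$\Rightarrow$(3) you have attached the two inequalities to the wrong hypotheses: nefness of $P$ (via subadditivity of $\sigma$, i.e.\ the maximality of $P_\sigma$ among nef-plus-effective splittings) gives $N_\sigma(D)\le N$, that is $P\le P_\sigma(D)$, not $P_\sigma(D)\le P$ as you claim; conversely it is the CKM isomorphisms together with bigness that give $N\le N_\sigma(D)$ (because CKM puts $N$ inside the asymptotic fixed part $F$, and $F=N_\sigma(D)$ for big $D$), that is $P_\sigma(D)\le P$. Both inequalities you need are true and both hypotheses are present in a CKM decomposition, so your conclusion $P=P_\sigma(D)$ stands, but the lemma as literally phrased (``nefness and the universal property of $P_\sigma$ give $P_\sigma(D)\le P$'') would fail if pursued, since nefness alone yields the opposite inequality. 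Second, in (3)$\Rightarrow$(1) the difference $h^*P-P'$ between your candidate positive part and a competing nef part need not be exceptional; the correct tool is the negativity lemma over the minimal model $Y$ itself: this difference is anti-nef over $Y$ (it is numerically $-P'$ over $Y$) and its pushforward to $Y$ is effective because $N$ is exceptional over $Y$, which is all the negativity lemma requires.
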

\begin{proof}
For the implication (3)$\implies$(2): see the proof of [\ref{B-WZD}, Theorem 1.5].
For (2)$\implies$(1): see Prokhorov [\ref{Prokhorov}, Section 7].

It remains to show that (1)$\implies$(3). By assumptions,
there is a log resolution $f\colon W\to X$ 
such that we have a Fujita-Zariski decomposition
$f^*(K_X+B)=P+N$. 
We can write
$$
K_W+B_W=f^*(K_X+B)+E
$$
where $B_W,E\ge 0$, and $E$ is exceptional$/X$ whose support contains each prime exceptional$/X$ divisor
$D$ on $W$ if $a(D, X, B)>0$. The pair $(W,B_W)$ is dlt and a log minimal model of $(W,B_W)$ 
is also a log minimal model of $(X,B)$ by [\ref{B-lc-flips}, Remark 2.8].
Moreover, $K_W+B_W=P+N+E$ is a Fujita-Zariski decomposition
with $P$ being the positive part: indeed, assume that $g\colon V\to W$ is a
birational morphism from a normal projective variety and that $g^*(K_W+B_W)=P'+N'$ where
$P'$ is nef and $N'\ge 0$. Then, $g^*f^*(K_X+B)+g^*E=P'+N'$ hence
$N'-g^*E$ is anti-nef$/X$ and the negativity lemma shows that $N'-g^*E\ge 0$. So,
$g^*P+g^*N=P'+N'-g^*E$ and from this we obtain $g^*P\ge P'$ which proves the claim.

By replacing $(X,B)$ with $(W,B_W)$ we could assume
that $K_X+B=P+N$ is a Fujita-Zariski decomposition and that $(X,B+N)$ is log smooth.
Since $K_X+B$ is big, we can write $K_X+B\sim_\R A+L$ where $A$ is an ample $\R$-divisor and
$L\ge 0$.

Assume that $N$ is a $\Q$-divisor. Then, by Theorem \ref{t-p-trivial-LMMP},
we can run a $P$-trivial LMMP on $K_X+B+\alpha P$ with scaling of some multiple of
$A$, for some sufficiently large
number $\alpha$. We arrive at a model $Y$ on which $K_Y+B_Y+\alpha P_Y+\epsilon A_Y$ is
semi-ample for some sufficiently small $\epsilon>0$.
Then, we have
\begin{equation*}
\begin{split}
(1+\alpha+\epsilon)(P_Y+N_Y) & = (1+\alpha+\epsilon)(K_Y+B_Y)\\
& \sim_\R K_Y+B_Y+\alpha P_Y+\epsilon A_Y+\alpha N_Y+\epsilon L_Y
\end{split}
\end{equation*}
Then, for some numerically trivial $\R$-divisor $Q$ we have
$$
(1+\alpha+\epsilon)(P_Y+N_Y)=Q+K_Y+B_Y+\alpha P_Y+\epsilon A_Y+\alpha N_Y+\epsilon L_Y
$$
Since $X\bir Y$ was obtained as a $P$-trivial LMMP$/Z$, $K_Y+B_Y=P_Y+N_Y$ is still a Fujita-Zariski decomposition 
by reasoning as in the second paragraph of this proof.
So, we get
$$
(1+\alpha+\epsilon)P_Y\ge Q+K_Y+B_Y+\alpha P_Y+\epsilon A_Y
$$
hence
$$
(1+\alpha+\epsilon)N_Y\le \alpha N_Y+\epsilon L_Y
$$
By our choice of $\epsilon$, the latter inequality is possible only if $N_Y=0$ in which case
$K_Y+B_Y=P_Y$ is nef and we get the desired log minimal model.

Now we deal with the case when $N$ may not be a $\Q$-divisor.
We argue as in the proof of Theorem \ref{t-main-1}.
Pick an $\R$-divisor $G$
with sufficiently small coefficients such that $\Supp G\subseteq \Supp N$,
$(X,B'=B+G)$ is dlt, $N'=N+G\ge 0$ is a $\Q$-divisor, and that $\Supp N'=\Supp N$. We get the
decomposition $K_X+B'=P+N'$ which is a Fujita-Zariski decomposition by Lemma \ref{l-FZD} with $P$ being
the positive part. By the arguments above we can run a $P$-trivial
LMMP on $K_X+B'$ which ends up with a log minimal model $(Y,B_Y')$. The LMMP contracts $N'$
hence it contracts $N$ as well. Let $f\colon W\to X$ and $g\colon W\to Y$ be a common resolution.
Put $R=f^*(K_X+B)-g^*(K_Y+B_Y)$. Since the LMMP is $P$-trivial, $f_*g^*(P_Y)=P$ from which we get
$$
f_*R=K_X+B-f_*g^*(K_Y+B_Y)=K_X+B-f_*g^*(P_Y)=K_X+B-P=N\ge 0
$$
Thus, $R\ge 0$ by the negativity lemma.
Moreover, any prime divisor $D$ on $X$ contracted by $X\bir Y$
is a component of $f_*R=N$ hence 
$$
a(D,X,B)<a(D,Y,B_Y)
$$
Therefore, $(Y,B_Y)$ is a log minimal model of $(X,B)$.
\end{proof}

We can generalize the theorem as in the following result.  
Recall that an $\R$-Cartier divisor $D$
is said to be \emph{abundant} if $\kappa(D)=\kappa_\sigma(D)$ (cf. [\ref{Lehmann}]) where 
$\kappa_\sigma(D)$ is the numerical Kodaira dimension defined by Nakayama [\ref{Nakayama}]. 
In particular, any big $\R$-Cartier divisor is abundant.

\begin{thm}
Let $(X,B)$ be a projective lc pair such that $K_X+B$ birationally has a Fujita-
Zariski decomposition. Assume that $K_X+B$
is abundant. Then $(X,B)$ has a log minimal model.
\end{thm}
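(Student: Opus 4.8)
The plan is to reduce the statement to Theorem \ref{t-main-1} by showing that, under the abundance hypothesis, the given Fujita-Zariski positive part agrees with the Nakayama-Zariski positive part; in particular $K_X+B$ birationally has a Nakayama-Zariski decomposition with nef positive part, and then Theorem \ref{t-main-1} produces the log minimal model with no further input. This is the right target because Theorem \ref{t-main-1} only asks for nefness of $P_\sigma$ and never requires semi-ampleness, so we avoid log abundance, which is exactly the difficulty that blocks a direct imitation of the big case (Theorem \ref{t-main-2}), where semi-ampleness was obtained from bigness.

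First I would run the same preliminary reduction as in the big case: pass to a log resolution $f\colon W\to X$ carrying the Fujita-Zariski decomposition $f^*(K_X+B)=P+N$ with $P$ nef, set $K_W+B_W=f^*(K_X+B)+E$ with $E\ge 0$ exceptional, and use [\ref{B-lc-flips}, Remark 2.8] and the second paragraph of the proof of Theorem \ref{t-main-2} to reduce to the situation $K_X+B=P+N$, a Fujita-Zariski decomposition with $P$ nef and $(X,B+N)$ log smooth. Since $P$ is nef we have $\sigma_\Gamma(K_X+B)\le \mult_\Gamma N$ for every prime divisor $\Gamma$, hence $N_\sigma(K_X+B)\le N$ and $P_\sigma(K_X+B)\ge P$. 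Writing $P_\sigma(K_X+B)=P+C$ with $C\ge 0$, the whole problem becomes: prove that $P_\sigma(K_X+B)$ is nef.

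Next I would exploit abundance. Because the Fujita-Zariski decomposition is also a CKM-Zariski decomposition (Prokhorov [\ref{Prokhorov}]), the maps $H^0(X,\lfloor mP\rfloor)\to H^0(X,\lfloor m(K_X+B)\rfloor)$ are isomorphisms, so $\kappa(P)=\kappa(K_X+B)$. On the other hand $\kappa(P_\sigma)=\kappa(K_X+B)$ and $\kappa_\sigma(P_\sigma)=\kappa_\sigma(K_X+B)$ by Nakayama [\ref{Nakayama}]. Combining these with the abundance hypothesis $\kappa(K_X+B)=\kappa_\sigma(K_X+B)$ and with $\kappa(P)\le\kappa_\sigma(P)\le\kappa_\sigma(K_X+B)$ (valid since $P$ is nef and $P\le K_X+B$), all of these Iitaka and numerical dimensions coincide and equal $k:=\kappa(K_X+B)$; in particular $P$ is nef and abundant. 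I would then invoke the structure theory of nef abundant divisors (Kawamata, Nakayama [\ref{Nakayama}]): after a further birational modification there is a contraction $\pi\colon X\to Z$ with $\dim Z=k$ and a big nef $\R$-divisor $H$ on $Z$ such that $P\equiv \pi^*H$.

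Finally I would bound $C$. Restricting to a general fibre $F$ of $\pi$ gives $P|_F\equiv 0$, while $P_\sigma|_F$ remains movable; by additivity of the numerical dimension along $\pi$ (the base contributing $k$ through the big class $H$), the equality $\kappa_\sigma(P_\sigma)=k$ forces $\kappa_\sigma(C|_F)=0$, and a movable class of numerical dimension zero is numerically trivial, so $C|_F\equiv 0$ and $C$ is vertical over $Z$. Using that $H$ is big together with $N_\sigma(P_\sigma)=0$, the effective vertical divisor $C$ can have no component contracted to codimension $\ge 2$ in $Z$, so $C$ is numerically a pullback and $P_\sigma=P+C\equiv\pi^*(H+C_Z)$ is nef; then Theorem \ref{t-main-1} applies. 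As a byproduct, once $P_\sigma$ is nef the Fujita maximality gives $P_\sigma\le P$, hence $C=0$ and the two decompositions literally coincide. I expect this last step to be the main obstacle: establishing the numerical-dimension additivity along $\pi$ and ruling out vertical components over codimension $\ge 2$ loci is the technical heart, and it is exactly where the abundance hypothesis is genuinely used.
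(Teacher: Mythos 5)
Your high-level plan is the same as the paper's: show that under the abundance hypothesis the Fujita positive part coincides with the Nakayama positive part, then quote Theorem \ref{t-main-1}. Your reductions up to the point $P_\sigma(K_X+B)=P+C$ with $C\ge 0$, and the deduction that $P$ is nef and abundant (via the CKM property from [\ref{Prokhorov}], $\kappa(P)=\kappa(K_X+B)$, and the squeeze of Iitaka and numerical dimensions), are correct. The problem is the last step, and the gap there is genuine in two ways. First, your dichotomy for vertical divisors is false: a component of $C$ mapping onto a divisor $D_Z\subset Z$ while not containing all of $\pi^{-1}(D_Z)$ is neither contracted to codimension $\ge 2$ nor numerically a pullback; ruling out such ``insufficient'' vertical components is precisely the content of Nakayama's technical lemmas, which you neither cite nor prove (the same goes for the additivity of $\kappa_\sigma$ along $\pi$ that you invoke). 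Second, and fatally, even granting $C\equiv\pi^*C_Z$ with $C_Z\ge 0$, nefness of $P_\sigma\equiv\pi^*(H+C_Z)$ does not follow: a big nef divisor plus an effective divisor need not be nef, and the information you actually have on $Z$, namely that $H+C_Z$ is big with $N_\sigma(H+C_Z)=0$, does not imply nefness once $\dim Z\ge 3$ (big movable divisors need not be nef). Your closing appeal to Fujita maximality cannot repair this, since it requires nefness of $P_\sigma$ as an input --- as you yourself note, it is only a byproduct.

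For comparison, the paper closes exactly this gap with a single citation instead of structure theory. Since the Fujita decomposition is in particular a CKM decomposition, the asymptotic fixed part $F=\lim\frac{1}{m}\Fix|\rddown{m(K_X+B)}|$ satisfies $F\ge N$; since $P$ is nef, $N\ge N_\sigma(K_X+B)$; and by [\ref{Lehmann}, Proposition 6.4], abundance of $K_X+B$ gives $F=N_\sigma(K_X+B)$. The sandwich $F\ge N\ge N_\sigma(K_X+B)=F$ forces $N=N_\sigma(K_X+B)$, so $P=P_\sigma(K_X+B)$ is nef and Theorem \ref{t-main-1} applies. The fibration-theoretic facts you are missing (behaviour of $\kappa_\sigma$ and $N_\sigma$ over the nef reduction, and the treatment of vertical divisors not of pullback type) are essentially the ingredients of Lehmann's proposition, so completing your sketch would amount to reproving it.
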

\begin{proof}
We will show that the Fujita-Zariski decomposition coincides with the Nakayama-Zariski
decomposition so we can apply Theorem \ref{t-main-1}.
By taking a log resolution we may assume $(X,B)$ dlt and that $K_X+B=P+N$ is the Fujita-Zariski
decomposition. In particular, this is also a CKM-Zariski decomposition hence $F\ge N$ where $F$ is 
the asymptotic fixed part $F:= \lim \frac{1}{m}\Fix|\rddown{m(K_X + B)}|$.
On the other hand, since $K_X+B$ is abundant, $F=N_\sigma(K_X+B)$ by [\ref{Lehmann}, Proposition 6.4]. 
We immediately obtain the conclusion from $F\ge N \ge N_\sigma(K_X+B)=F$.
\end{proof}

Using Theorem \ref{t-main-2} we can take care of Theorem \ref{t-main-4}.

\begin{proof}(of Theorem \ref{t-main-4})
If $(X,B)$ is $\Q$-factorial dlt, the proof is actually trivial since we can 
easily get rid of $P$ and the lc centres hence the claim reduces to the well-known 
finite generation for klt pairs. 
But presence of lc singularities causes difficulties.
Let $D:=K_X+B+P$ and let $A$ be an ample $\Q$-divisor. Perhaps after replacing 
$A$ with some small multiple, by assumptions, we can write $D\sim_\Q A+L$ where $L\ge 0$ and $\Supp L$ 
does not contain any lc  centre of $(X,B)$. Thus, we can find a boundary $\Delta$ such that 
$$
K_X+\Delta\sim_\Q K_X+B+P+\epsilon A+\epsilon L\sim_\Q (1+\epsilon) D
$$
and $(X,\Delta)$ is lc where $\epsilon$ is a small rational number. Moreover, we may assume 
that the lc centres of $(X,\Delta)$ are exactly the lc centres of $(X,B)$. By replacing $(X,B+P)$ 
with $(X,\Delta)$ from now on we can assume that $P=0$. 

Since $K_X+B$ is big and it birationally has a CKM-Zariski decomposition, $(X,B)$ has a log minimal model 
by Theorem \ref{t-main-2}. So, we can run an LMMP on $K_X+B$ with scaling of some ample divisor 
which ends up with a weak lc model of $(X,B)$ as defined in Section 2: all the necessary ingredients for such an LMMP 
are already established; more precisely we need the cone and contraction theorem for lc pairs 
[\ref{Fujino-MMP}, subsection 3.3.3], 
existence of lc flips [\ref{B-lc-flips}, Corollary 1.2], 
certain properties of extremal rays [\ref{Fujino-MMP}, Proposition 3.23], and termination with scaling 
[\ref{B-lc-flips}, Theorem 1.9]. 

We will show that the LMMP preserves the condition 
that ${\bf{B}}_+(D)$ does not contain any lc centre of $(X,B)$. Indeed let $X\bir X^+/Z$ be a $K_X+B$-flip. 
For a divisor $F$ on $X$, $F^+$ denotes the birational transform on $X^+$. Similarly for a 
divisor $C^+$ on $X^+$, $C$ will denote its birational transform on $X$. Since $D^+=K_{X^+}+B^+$ is ample$/Z$, 
there is a divisor $H^+$ on $X^+$ which is the pullback of a sufficiently ample divisor on $Z$ 
so that $C^+:=D^++H^+$ is ample and effective. Since $C=D+H$ and $D,H$ are both $\Q$-Cartier, $C$ is also $\Q$-Cartier. So, we can write 
$A=\epsilon C+F$ where $A\ge 0$ and $F\ge 0$ are ample divisors and $\epsilon>0$ is rational. Moreover, we can assume that   
$D\sim_\Q A+L$ where $L\ge 0$ and $\Supp L$ 
does not contain any lc  centre of $(X,B)$. Choose $C^+$ and $F$ so that $\Supp C^+$ does not  
contain any lc  centre of $(X^+,B^+)$ and that $\Supp F$ does not contain any lc  centre of $(X,B)$. 
Then, $D^+\sim_\Q \epsilon C^++F^++L^+$ where $\Supp (\epsilon C^++F^++L^+)$ does not contain any
lc  centre of $(X^+,B^+)$. This proves that ${\bf{B}}_+(D^+)$ does not contain any 
lc  centre of $(X^+,B^+)$. A similar reasoning can be applied to the case of a divisorial contraction. 

By replacing $X$ with the weak lc model, 
we can assume that $K_X+B$ is nef and that $(K_X+B)|_V$ is big for any lc centre $V$. 
By Fujino-Gongyo [\ref{Fujino-Gongyo}, Theorem 4.2], $K_X+B$ is semi-ample which implies that 
$R(K_X+B)$ is a finitely generated $k$-algebra.  
\end{proof}

\vspace{0.3cm}
\section{Weak Zariski decompositions}\label{s-WZD}

Recall that an $\R$-Cartier divisor $D$ on a normal projective variety $Y$ birationally 
has a weak Zariski decomposition if there is a resolution $f\colon W\to Y$ such that $f^*D=P+N$
where $P$ is nef and $N\ge 0$. Assume that for any projective lc pair $(X,B)$ of dimension 
$\le d$ with $K_X+B$ pseudo-effective birationally we have
a weak Zariski decomposition for $K_X+B$. Can one construct log minimal models for such pairs?
In this section, we outline a strategy to tackle this problem.

Let $(X,B)$ be a projective lc pair of dimension $\le d$ with $K_X+B$ having a weak Zariski decomposition. By taking a log
resolution we may assume that the pair is log smooth and that $K_X+B=P+N$ is
the decomposition where $P$ is nef and $N\ge 0$.

\emph{Step 1.}
By the proof of
[\ref{B-WZD}, Theorem 1.5], we may assume that $\Supp N\subseteq \Supp \rddown{B}$.

\emph{Step 2.}
Let $\alpha$ be a sufficiently large number and run an LMMP
on $K_X+B+\alpha P$ with scaling of some ample divisor. Then, by Theorem \ref{t-p-trivial-LMMP}, $P$ is trivial on each
extremal ray contracted in the process and we get an LMMP on $K_X+B$.
By replacing $X$ we may assume that the LMMP consists of only flips.
We should show that this LMMP terminates by using special termination arguments.
If $S$ is a component of $\rddown{B}$ and if we put $K_S+B_S=(K_X+B)|_S$ and $P_S=P|_S$,
then we need to show that  the induced LMMP on $K_S+B_S+\alpha P_S$ terminates.
This is obviously related to the material in Section 3. This should be somehow
derived from existence of weak Zariski decompositions and log minimal models in dimension $<d$.

\emph{Step 3.} If Step 2 is done successfully, then we can assume that $K_X+B+\alpha P$ is nef.
Next, run an LMMP on $K_X+B$ with scaling of $\alpha P$ as in the proof of Theorem \ref{t-l-pairs-termination}.
Again as in Step 2, we need to use special termination
arguments to show that the LMMP terminates. In other words, for a component $S$
of $\rddown{B}$ we have $K_S+B_S+\alpha P_S$ nef and we need to show that the induced
LMMP on $K_S+B_S$ with scaling of $\alpha P_S$ terminates. As mentioned in Step 2, this is
related to Section 3.

In order to prove the terminations required in Steps 2 and 3 we probably need to
generalize  [\ref{B-II}, Theorem 1.5] to the setting of polarized pairs. There are indications that 
in the strategy above it might be better to start with a polarized pair rather than a usual pair $(X,B)$.


\vspace{2cm}

\flushleft{DPMMS}, Centre for Mathematical Sciences,\\
Cambridge University,\\
Wilberforce Road,\\
Cambridge, CB3 0WB,\\
UK\\
email: c.birkar@dpmms.cam.ac.uk\\
email: zh262@dpmms.cam.ac.uk

\vspace{1cm}

\end{document}